\documentclass[]{amsart}

\usepackage{amsbsy, amscd, amsmath, amssymb, amstext, amsxtra, commath, eucal, float, graphicx, hyperref, latexsym, mathrsfs, mathtools, multicol, tikz, tikz-cd}
\usepackage[utf8]{inputenc}
\usepackage[all]{xy}

\usepackage{lineno}

\newtheorem{theorem}{Theorem}[section]
\newtheorem{lemma}[theorem]{Lemma}
\newtheorem{proposition}[theorem]{Proposition}
\newtheorem{corollary}[theorem]{Corollary}
\theoremstyle{definition}
\newtheorem{definition}[theorem]{Definition}
\newtheorem{example}[theorem]{Example}

\numberwithin{equation}{section}

\newtheorem{question}[theorem]{Question}

\newcommand\restr[2]{\ensuremath{#1\!\!\upharpoonright_{#2}}\,}

\def\CN{\mathop{\mathsf{CN}}\nolimits}
\def\ON{\mathop{\mathsf{ON}}\nolimits}
\def\SC{\mathop{\mathsf{SC}}\nolimits}

\def\cf{\mathop{\rm cf}\nolimits}

\def\CL{\mathop{\rm CL}\nolimits}

\def\int{\mathop{\rm int}\nolimits}

\def\CL{\mathop{\rm CL}\nolimits}
\def\K{\mathop{\mathcal{K}\hspace{0mm}}\nolimits}
\def\F{\mathop{\mathcal{F}\hspace{0mm}}\nolimits}
\def\HH{\mathop{\mathcal{H}\hspace{0mm}}\nolimits}

\def\Pot{\mathop{\mathscr{P}\hspace{0mm}}\nolimits}

\newcommand{\vt}[1]{\langle #1\rangle}

\let\mathcal\mathscr

\tikzcdset{scale cd/.style={every label/.append style={scale=#1},
    cells={nodes={scale=#1}}}}
    
    \newcommand{\stickT}{\setbox255=\hbox{\raise1ex\hbox{$\hspace{0.2pt}\,\bullet\,$}}\mathord{\rlap{\hbox to\wd255{\hss\hbox{$|$}\hss}}\box255}}
\newcommand{\stickS}{\setbox255=\hbox{\raise0.6ex\hbox{$\scriptstyle\bullet$}}\mathord{\rlap{\hbox to\wd255{\hss\hbox{$\scriptstyle|$}\hss}}\box255}}
\newcommand{\stick}{{\mathchoice{\stickT}{\stickT}{\stickS}{\stickS}}}

\begin{document}

\subjclass[2020]{54A25, 54A35, 54B20.}

\keywords{super calibers, calibers, Vietoris hyperspaces.}

%\linenumbers

\title{Super calibers in topological spaces and topological hyperspaces}

\author{Alejandro R\'ios Herrej\'on}
\address {A. Ríos Herrej\'{o}n\\
Departamento de Matem\'aticas, Facultad de Ciencias, Universidad Nacional Aut\'onoma de M\'exico, Circuito ext. s/n, Ciudad Universitaria, C.P. 04510,  M\'exico, CDMX}
\email{chanchito@ciencias.unam.mx}

\begin{abstract}

We study the notion of a super caliber of a topological space, which is closely related to the classical notion of caliber and has appeared in the literature under several different names. We collect and unify several known results and establish new results concerning the collections of super calibers of topological spaces and their hyperspaces. In particular, we investigate the relationship between the super calibers of a space $X$ and those of hyperspaces $\mathcal{H}(X)$ lying between $\CL(X)$ and $\mathcal{F}(X)$. For infinite metrizable spaces, we characterize several cases in which $\SC(X)$ and $\SC(\CL(X))$ differ and establish an independence result over \textsf{ZFC}; see Theorem~\ref{thm_CF_principal}.

\end{abstract}

\maketitle

%%%%%%%%%%%%%%%%%%%%%%%%%%%%%%%%%%%%%%%%%%%%%%%%%%%%%%%%%%%%%%
\section{Introduction}
%%%%%%%%%%%%%%%%%%%%%%%%%%%%%%%%%%%%%%%%%%%%%%%%%%%%%%%%%%%%%%

In recent years, the study of chain conditions and, in particular, caliber theory in topological spaces has received considerable attention from the mathematical community.

A notion closely related to the property of a space having a given caliber has also been extensively studied in the literature, particularly in the case of the cardinal $\omega_1$, and under several different names.

For example, the property $C(\omega_1)$, or, equivalently, the property of belonging to the class $C(\omega_1)$, has been studied in~\cite{dowjuh2025}, \cite{jss2025}, \cite{linxu2026}, and \cite{rios2023}. For $\kappa=\omega_1$, this property coincides with the property $\mathrm{suHG}$ studied by Hart and Kunen in~\cite{harkun2020}, as well as with the property $(C')$ studied by Sakai in~\cite{sakai2012}.

More generally, if $\kappa$ is an infinite cardinal, we say that a topological space $X$ has \emph{super caliber} $\kappa$ if, for every $\{x_\alpha : \alpha<\kappa\} \subseteq X$ and $\{U_\alpha : \alpha<\kappa\} \subseteq \tau_X$ satisfying $x_\alpha \in U_\alpha$ for each $\alpha<\kappa$, there exists a set $J \in [\kappa]^{\kappa}$ such that
\[
\textstyle
\{x_\alpha : \alpha \in J\} \subseteq \bigcap_{\alpha \in J} U_\alpha,
\]
or, equivalently, $x_\alpha \in U_\beta$ for every $\alpha,\beta \in J$. Thus, in the case $\kappa = \omega_1$, having the property $C(\omega_1)$, the property $(C')$, or the property $\mathrm{suHG}$ is equivalent to having super caliber $\omega_1$.

The aim of this paper is twofold. On the one hand, in the first part of the paper, we provide a brief and unified account of basic results known in the literature, as well as of some more substantial results, with the purpose of bringing them together under a common terminology, namely, the notion of a super caliber of a topological space. We also contribute to the study of what may be viewed as the theory of super calibers in topological spaces by presenting several examples of spaces and their collections of super calibers, as well as by comparing classical results from caliber theory with their counterparts in the theory of super calibers.

On the other hand, \cite{riotam2026} provides a general study of the relationship between the calibers, precalibers, and weak precalibers of a topological space $X$ and those of its hyperspaces $\mathcal{H}(X)$ lying between $\CL(X)$, the hyperspace of nonempty closed subsets of $X$, and $\mathcal{F}(X)$, the subspace consisting of finite subsets.

With this in mind, another part of this work is devoted to contributing to hyperspace theory by analyzing the relationship between the super calibers of a topological space $X$ and those of its hyperspaces $\mathcal{H}(X)$. In marked contrast with the results obtained in~\cite{riotam2026}, we exhibit several notable differences, ranging from examples in which the two collections of super calibers do not coincide to independence results concerning hyperspaces over infinite metrizable spaces.

More specifically, Section~\ref{secc_pre} is devoted to introducing notation that will be used in what follows. In Section~\ref{secc_SC}, we review several known results from the literature concerning super calibers and develop some basic aspects of the concept. We then turn, in Section~\ref{secc_TH}, to the study of hyperspaces in the context of super calibers. Finally, in Section~\ref{secc_THM}, we analyze hyperspaces over metrizable spaces and establish an independence result over \textsf{ZFC}.

%%%%%%%%%%%%%%%%%%%%%%%%%%%%%%%%%%%%%%%%%%%%%%%%%%%%%%%%%%%%
\section{Preliminaries}\label{secc_pre}
%%%%%%%%%%%%%%%%%%%%%%%%%%%%%%%%%%%%%%%%%%%%%%%%%%%%%%%%%%%%

The purpose of this brief section is to establish the terminology and notation that we use systematically throughout the paper. Unless otherwise indicated, we follow the standard conventions in the literature.

We use the symbol $\omega$ to denote both the first infinite ordinal and the first infinite cardinal. We also define $\mathbb{N}:=\omega\setminus\{0\}$. The symbol $\mathfrak{c}$ denotes the cardinality of the continuum, namely, the cardinal $2^\omega$. 

On the other hand, if $X$ is a set, we denote its \emph{power set} by $\mathcal{P}(X)$. Also, if $\kappa$ is a cardinal number, we define
\[
[X]^{\kappa} := \{A \subseteq X : |A| = \kappa\} \quad \text{and} \quad [X]^{<\kappa} := \{A \subseteq X : |A| < \kappa\}.
\]

Let $X$ be a topological space. We denote by $\tau_X$ the topology on $X$, and by $\tau_X^+$ the set $\tau_X\setminus\{\emptyset\}$ of all nonempty open subsets of $X$.

We say that an infinite cardinal $\kappa$ is a \emph{caliber} for $X$ if, for every family $\{U_\alpha : \alpha<\kappa\} \subseteq \tau_X^+$, there exists a set $J \in [\kappa]^\kappa$ such that the family $\{U_\alpha : \alpha\in J\}$ has nonempty intersection.

Finally, we use the symbols $\ON$ and $\CN$ to denote, respectively, the class of all ordinals and the class of all infinite cardinals.

%%%%%%%%%%%%%%%%%%%%%%%%%%%%%%%%%%%%%%%%%%%%%%%%%%%%%%%%%%%%
\section{Super calibers}\label{secc_SC}
%%%%%%%%%%%%%%%%%%%%%%%%%%%%%%%%%%%%%%%%%%%%%%%%%%%%%%%%%%%%

As mentioned in the introduction, \cite{rios2023} studies the property $C(\kappa)$, where $\kappa$ is an infinite cardinal, while \cite{linxu2026} studies the same property when $\cf(\kappa)>\omega$. The papers \cite{dowjuh2025} and \cite{jss2025} focus in particular on the case $\kappa=\omega_1$. The property $C(\omega_1)$ coincides with the property $(C')$ studied by Sakai in~\cite{sakai2012} and with the property $\mathrm{suHG}$ studied by Hart and Kunen in~\cite{harkun2020}.

Since the property $C(\kappa)$ is a substantial strengthening of the notion of having caliber $\kappa$, we shall refer to it as follows:

\begin{definition}\label{def_CF} Let $X$ be a topological space and let $\kappa$ be an infinite cardinal. We say that $X$ has \emph{super caliber} $\kappa$ if, for every $\{x_\alpha : \alpha<\kappa\} \subseteq X$ and $\{U_\alpha : \alpha<\kappa\} \subseteq \tau_X$ satisfying $x_\alpha \in U_\alpha$ for each $\alpha<\kappa$, there exists a set $J \in [\kappa]^{\kappa}$ such that $\{x_\alpha : \alpha \in J\} \subseteq \bigcap_{\alpha \in J} U_\alpha$, or, equivalently, $x_\alpha \in U_\beta$ for every $\alpha,\beta \in J$.

\end{definition}

It is worth briefly noting that, originally, the proposed term was ``strong caliber''; however, in~\cite{jssv2023}, this terminology is used to designate a cardinal $\kappa$ with the property that, for every $\mathcal{U} \in [\tau_X]^\kappa$, there exists $\mathcal{V} \in [\mathcal{U}]^{\kappa}$ such that the intersection $\bigcap\mathcal{V}$ has nonempty interior. Clearly, this concept does not coincide with the one established in Definition~\ref{def_CF}.

With the change in terminology above, some of the results developed in \cite{dowjuh2025}, \cite{harkun2020}, \cite{jss2025}, and \cite{rios2023} are collected in Proposition~\ref{prop_CF_basico}. In particular, item~(9) was obtained in \cite{jss2025}, item~(10) in \cite{dowjuh2025}, item~(11) in \cite{harkun2020}, and the remaining items in \cite{rios2023}.

\begin{proposition}\label{prop_CF_basico} The following statements hold for every topological space $X$ and every infinite cardinal $\kappa$.

\begin{enumerate}
\item If $X$ has super caliber $\kappa$ and $Y$ is a subspace of $X$, then $Y$ has super caliber $\kappa$. In particular, every subspace of $X$ has caliber $\kappa$.
\item If $X$ has super caliber $\kappa$, then $X$ has super caliber $\cf(\kappa)$.
\item If $X$ has super caliber $\kappa$, then every subspace $Y$ of $X$ satisfies $d(Y) < \cf(\kappa)$. In particular, $hd(X) \leq \cf(\kappa)$.
\item If $\cf(\kappa)>nw(X)$, then $X$ has super caliber $\kappa$.
\item If $X$ has super caliber $\kappa$ and $Y$ is a continuous image of $X$, then $Y$ has super caliber $\kappa$.
\item If $n\in \mathbb{N}$ and $\{X_k : k<n\}$ is a family of topological spaces, each with super caliber $\kappa$, then $\prod_{k<n} X_k$ has super caliber $\kappa$.
\item If $\lambda$ is a positive cardinal, $\cf(\kappa)>\lambda$, and $\{X_\alpha : \alpha<\lambda\}$ is a family of subspaces of $X$, each with super caliber $\kappa$, then $\bigcup_{\alpha<\lambda} X_\alpha$ has super caliber $\kappa$.
\item If $\lambda$ is a positive cardinal and $\{X_\alpha : \alpha<\lambda\}$ is a family of topological spaces, then $\bigoplus_{\alpha<\lambda} X_\alpha$ has super caliber $\kappa$ if and only if $\cf(\kappa)>\lambda$ and each $X_\alpha$ has super caliber $\kappa$.
\item Under the combinatorial principle $\stick_{\text{SUPER}}$, if $X$ is $T_3$ and has super caliber $\omega_1$, then $nw(X) = \omega$.
\item There exists a Hausdorff space $Z$ that has super caliber $\omega_1$ and satisfies $nw(X)>\omega$.
\item It is consistent with \textsf{ZFC} to have \textsf{MA} and $\mathfrak{c} = \aleph_2$ and a first countable space $X$ having super caliber $\omega_1$ with $|X| = w(X) = nw(X) = \aleph_2$.

\end{enumerate}

\end{proposition}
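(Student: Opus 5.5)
We prove items (1)--(8) directly from Definition~\ref{def_CF}, and for items (9)--(11) we refer to the cited sources: \cite{jss2025}, \cite{dowjuh2025} and \cite{harkun2020}, respectively. These three items are genuinely deep results, a $\stick$-type set-theoretic argument, a ZFC construction and a forcing construction. Reproducing them is the only real obstacle, and we do not attempt it here.

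For (1), let $Y\subseteq X$ and take points $y_\alpha\in V_\alpha$ with $V_\alpha$ open in $Y$. Write $V_\alpha=U_\alpha\cap Y$ with $U_\alpha\in\tau_X$ and apply super caliber $\kappa$ in $X$. Given $J$, the relations $y_\alpha\in U_\beta$ together with $y_\alpha\in Y$ give $y_\alpha\in V_\beta$. Super caliber $\kappa$ implies caliber $\kappa$: given nonempty open sets $U_\alpha$, choose $x_\alpha\in U_\alpha$; then any $x_\gamma$ with $\gamma\in J$ lies in $\bigcap_{\alpha\in J}U_\alpha$. Item (5) is the same idea for continuous images. Let $f\colon X\to Y$ be a continuous surjection, $y_\alpha\in V_\alpha$, choose $x_\alpha\in f^{-1}(y_\alpha)$, and use the open sets $f^{-1}(V_\alpha)$.

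For (6), it suffices to treat two factors. First apply super caliber $\kappa$ to the first coordinates, obtaining $J_1\in[\kappa]^\kappa$. Then reindex $J_1$ by $\kappa$ and apply super caliber $\kappa$ to the second coordinates to get $J\subseteq J_1$ of size $\kappa$. The basic open boxes then work for both coordinates; one first shrinks each $U_\alpha$ to a basic box around $x_\alpha$. For (4), fix a network $\mathcal N$ with $|\mathcal N|<\cf(\kappa)$ and pick $N_\alpha\in\mathcal N$ with $x_\alpha\in N_\alpha\subseteq U_\alpha$. Since $|\mathcal N|<\cf(\kappa)$, the pigeonhole principle gives $J\in[\kappa]^\kappa$ on which $N_\alpha=N$ is constant, and then $x_\alpha\in N\subseteq U_\beta$ for all $\alpha,\beta\in J$. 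Item (7) is again a pigeonhole argument. Each $x_\alpha$ lies in some $X_{\gamma(\alpha)}$, and because $\cf(\kappa)>\lambda$ some $\gamma$ occurs $\kappa$ times, so we apply super caliber $\kappa$ inside $X_\gamma$ using (1).

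For (2), fix a strictly increasing sequence $\langle\kappa_\xi:\xi<\cf(\kappa)\rangle$ cofinal in $\kappa$, with $\kappa_0=0$ when $\kappa$ is singular. When $\kappa$ is regular we simply take singleton blocks. Given $x_\xi\in U_\xi$ for $\xi<\cf(\kappa)$, we blow the family up to length $\kappa$ by assigning $(x_\xi,U_\xi)$ to every index in the block $[\kappa_\xi,\kappa_{\xi+1})$. A set $J\in[\kappa]^\kappa$ cannot lie in fewer than $\cf(\kappa)$ blocks, each of size $<\kappa$. Hence the set of $\xi$ whose block meets $J$ has size $\cf(\kappa)$, and it witnesses super caliber $\cf(\kappa)$.

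For (3), suppose some $Y\subseteq X$ has $d(Y)\ge\cf(\kappa)$. Since $\cf(\kappa)$ is regular, we can recursively choose $y_\xi\in Y\setminus\overline{\{y_\eta:\eta<\xi\}}$ for $\xi<\cf(\kappa)$. Then pick open sets $U_\xi\ni y_\xi$ missing all earlier $y_\eta$. By (2), there is $J$ of size $\cf(\kappa)$ witnessing super caliber $\cf(\kappa)$. But for $\eta<\xi$ in $J$ we would need $y_\eta\in U_\xi$, which is a contradiction. The bound $hd(X)\le\cf(\kappa)$ follows immediately.

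For (8), the direction ``$\Leftarrow$'' is (7) applied to the summands viewed as subspaces. For ``$\Rightarrow$'', each summand is a subspace, so it has super caliber $\kappa$ by (1). If $\cf(\kappa)\le\lambda$, choose $\cf(\kappa)$ distinct summands, a point $x_\xi$ in each, and $U_\xi$ equal to its summand. Blowing this up as in (2) gives a $\kappa$-sequence. Any witnessing $J$ of size $\kappa$ meets two different blocks, and points in different blocks lie in disjoint summands, which is a contradiction.
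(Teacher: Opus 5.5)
The paper gives no proof of this proposition. It is presented as a compilation: item (9) is attributed to \cite{jss2025}, item (10) to \cite{dowjuh2025}, item (11) to \cite{harkun2020}, and items (1)--(8) to \cite{rios2023}. For (9)--(11) you defer to the same sources, so the real difference is that you supply self-contained arguments for (1)--(8) where the paper only cites. Those arguments are correct and are the natural elementary ones:
\begin{itemize}
\item restricting open sets to a subspace, or pulling them back along a continuous map;
\item thinning coordinate by coordinate for finite products;
\item pigeonhole over a network, or an index set, of size $<\cf(\kappa)$;
\item blowing a $\cf(\kappa)$-sequence up to a $\kappa$-sequence by blocks.
\end{itemize}
What you gain is that the later uses of (1), (2), (4) and (6) in the paper can be checked on the spot.

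A few small points should be tightened.
\begin{itemize}
\item In (2), the blocks $[\kappa_\xi,\kappa_{\xi+1})$ cover $\kappa$ only if the cofinal sequence is continuous at limit $\xi$. This matters when $\cf(\kappa)>\omega$. Either choose the sequence continuous, or use the map $\gamma\mapsto\min\{\xi:\gamma<\kappa_\xi\}$, whose fibres all have size $<\kappa$.
\item In (3), your recursion needs $d(Y)$ to be the least size of a dense subset, with no $+\omega$. This is also the only reading under which (3) holds when $\cf(\kappa)=\omega$. You should also apply (2) to $Y$, which has super caliber $\kappa$ by (1), or else lift the $U_\xi$ to open sets of $X$.
\item In (8), the forward direction uses that the chosen summands are nonempty. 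The statement itself tacitly needs this too. A sum in which fewer than $\cf(\kappa)$ summands are nonempty has super caliber $\kappa$ whenever those summands do, regardless of $\lambda$.
\end{itemize}
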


It is worth noting that there are also important connections between the theory of super calibers and the theory of Pixley--Roy hyperspaces. Although we do not include these results in Proposition~\ref{prop_CF_basico}, since it is devoted exclusively to topological spaces in general, we superly encourage interested readers to consult \cite{linxu2026}, \cite{rios2023}, and \cite{sakai2012}.

Given a topological space $X$, in this text we call the cardinal invariant
\[
s\check{s}(X) := \min\{\kappa \in \CN : \text{$X$ has super caliber $\kappa^+$}
\]
the \emph{super \v{S}anin number}. Naturally, this concept is related to the traditional \emph{\v{S}anin number}:
\[
\check{s}(X) := \min\{\kappa \in \CN : \text{$X$ has caliber $\kappa^+$}\}.
\]
Clearly, by items (1) and (4) of Proposition~\ref{prop_CF_basico}, we have $\check{s}(X) \leq s\check{s}(X) \leq nw(X)$. We also consider the collection
\[
\SC(X) := \{\kappa \in \CN : \text{$\kappa$ is a super caliber for $X$}\}.
\]

Proposition~\ref{prop_CF_metrizable} will be fundamental to several of the results presented in this section:

\begin{proposition}\label{prop_CF_metrizable} If $X$ is an infinite metrizable space, then
\[
\SC(X) = \{\kappa \in \CN : \cf(\kappa)>nw(X)\}.
\]
In particular, $s\check{s}(X) = nw(X)$.

\end{proposition}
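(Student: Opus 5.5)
The inclusion $\{\kappa\in\CN:\cf(\kappa)>nw(X)\}\subseteq\SC(X)$ is exactly item~(4) of Proposition~\ref{prop_CF_basico}, so all the work is in the reverse inclusion. Fix $\kappa\in\SC(X)$; we must show $\cf(\kappa)>nw(X)$. For a metrizable space, $nw(X)=w(X)=d(X)=hd(X)$. Item~(3) of Proposition~\ref{prop_CF_basico} gives $d(Y)<\cf(\kappa)$ for every subspace $Y\subseteq X$, and in particular $d(X)<\cf(\kappa)$. Hence $nw(X)=d(X)<\cf(\kappa)$, which is the required inclusion.

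The step I expect to be the main obstacle is the use of item~(3) with the \emph{strict} inequality $d(Y)<\cf(\kappa)$; the weaker consequence $hd(X)\le\cf(\kappa)$ would not suffice. If one prefers not to rely on the strict form, a direct argument works. By item~(2), $\cf(\kappa)\in\SC(X)$, so we may assume $\kappa$ is regular. Suppose toward a contradiction that $\kappa\le d(X)$. Since $X$ is metrizable, $d(X)$ equals the supremum of the cardinalities of discrete subspaces (extent and cellularity agree with density in metric spaces).

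If $\kappa$ is attained, choose a discrete $D=\{x_\alpha:\alpha<\kappa\}$ with open sets $U_\alpha\ni x_\alpha$ such that $U_\alpha\cap D=\{x_\alpha\}$. Then no $J$ with two elements satisfies $x_\alpha\in U_\beta$ for all $\alpha,\beta\in J$, contradicting super caliber $\kappa$.

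If $\kappa=d(X)$ is a limit cardinal not attained by discrete subspaces, I would instead use a $\sigma$-discrete base $\bigcup_n\mathcal{B}_n$. Because $\kappa=w(X)$ is regular and uncountable, some $\mathcal{B}_n$ must have size $\kappa$. Pick $\kappa$ distinct members of that $\mathcal{B}_n$ and one point in each; these points form a discrete set of size $\kappa$, which contradicts the assumption that $\kappa$ is not attained and so reduces to the previous case. (When $\kappa=\omega$ the statement $\cf(\kappa)>nw(X)$ fails for every infinite $X$, so one must check separately that $\omega\notin\SC(X)$. This holds because an infinite metrizable space contains an infinite discrete subspace, and the same two-point argument applies.)

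Finally, $s\check{s}(X)=nw(X)$ follows immediately. $nw(X)^+$ is regular and exceeds $nw(X)$, so it lies in $\SC(X)$. On the other hand, for any $\lambda<nw(X)$ we have $\cf(\lambda^+)=\lambda^+\le nw(X)$, so $\lambda^+\notin\SC(X)$.
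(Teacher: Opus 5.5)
Your proof is correct, but your main argument takes a different route from the paper's. The paper obtains the nontrivial inclusion by citing Hodel for a cellular family $\{U_\alpha:\alpha<nw(X)\}$ of full cardinality $nw(X)$ in a metrizable space. If $\cf(\kappa)\le nw(X)$, it keeps $\cf(\kappa)$ of these sets and chooses a point in each, which shows $\cf(\kappa)\notin\SC(X)$; item~(2) then transfers this to $\kappa$. You instead combine the equality $nw(X)=d(X)$ for metrizable spaces with the \emph{strict} bound $d(Y)<\cf(\kappa)$ of item~(3). That bound holds because a left-separated sequence of length $\cf(\kappa)$ cannot have super caliber $\cf(\kappa)$. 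This gives $nw(X)<\cf(\kappa)$ at once and handles $\kappa=\omega$ automatically. Your route never needs the supremum defining cellularity or spread to be \emph{attained}, which is the nontrivial content of the cited result when $\cf(nw(X))=\omega$. The paper's route instead produces an explicit witness and relies only on item~(2), not on the sharper item~(3). Your backup argument is essentially the paper's approach made self-contained. Having reduced to regular $\kappa$, you only need discrete sets of size exactly $\kappa$, and the $\sigma$-discrete base supplies them when $\kappa=w(X)$ is regular and uncountable, so the attainment theorem for singular weights is not required. The deduction of $s\check{s}(X)=nw(X)$ is fine.
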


\begin{proof} First, the right-to-left inclusion follows from Proposition~\ref{prop_CF_basico}(4). Conversely, let $\{U_\alpha : \alpha < nw(X)\}$ be a cellular family of cardinality $nw(X)$ (see \cite{hodel1984}), and suppose that $\kappa\in \CN$ satisfies $\cf(\kappa) \leq nw(X)$. Then the collection $\{U_\alpha : \alpha < \cf(\kappa)\}$ witnesses that $X$ does not have super caliber $\cf(\kappa)$ and, consequently, that it does not have super caliber $\kappa$, by Proposition~\ref{prop_CF_basico}(2).
\end{proof}

A natural question is whether the equality established in Proposition~\ref{prop_CF_metrizable} remains valid when ``metrizable'' is replaced by ``submetrizable'' (that is, a topological space whose topology contains a metrizable topology). The answer is negative:

\begin{example}\label{ej_CF_submetrizable} There exists a submetrizable space $X$ such that
\[
\omega_1 \in \SC(X) \setminus \{\kappa \in \CN : \cf(\kappa) > nw(X)\}.
\]

\medskip

In \cite[Theorem~2, p.~2]{dowjuh2025}, an example is constructed of a submetrizable space $X$, obtained by refining the usual Euclidean topology on the real line, with the property that $\omega_1$ is a super caliber for $X$ and, at the same time, the net weight of $X$ is uncountable. Clearly, $X$ has the desired properties.

\end{example}

A topological space $X$ is \emph{weakly separated} if there exists a family $\{U_x : x \in X\} \subseteq \tau_X$ satisfying the following conditions: $x \in U_x$ for every $x \in X$, and if $x,y \in X$ are distinct, then $x \notin U_y$ or $y \notin U_x$. This class of spaces was introduced by Tkachenko in \cite{tkachenko1978} and has been extensively studied in \cite{bell1979} and \cite{rios2023}.

For example, every countable $T_1$ space, every right-separated space (in particular, every discrete space), every left-separated space, the Sorgenfrey line, and the Pixley--Roy hyperspace $\F[X]$ over any $T_1$ space are weakly separated (see \cite{rios2023}).

In the context of the present work, weakly separated spaces exhibit a property similar to that of metrizable spaces in the sense of Proposition~\ref{prop_CF_metrizable}. This similarity is reflected in Proposition~\ref{prop_CF_deb_sep}, and its proof requires only Proposition~\ref{prop_CF_basico}(4) together with three facts: if $X$ is a weakly separated space, then every subspace of $X$ is weakly separated, $|X| = nw(X)$ (see \cite{bell1979}), and $|X|$ is not a super caliber for $X$.

\begin{proposition}\label{prop_CF_deb_sep} If $X$ is an infinite weakly separated space, then
\[
\SC(X) = \{\kappa \in \CN : \cf(\kappa)>nw(X)\}.
\]

\end{proposition}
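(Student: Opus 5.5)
The right-to-left inclusion is immediate from Proposition~\ref{prop_CF_basico}(4): if $\cf(\kappa)>nw(X)$, then $X$ has super caliber $\kappa$. So the work lies in the left-to-right inclusion. Following the three facts quoted before the statement, I will use that subspaces of weakly separated spaces are weakly separated, that $|Y|=nw(Y)$ for every weakly separated $Y$, and that $|Y|$ is not a super caliber for an infinite weakly separated $Y$.

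First I would verify this last fact directly, since it is the one genuinely combinatorial step. Let $\lambda=|Y|$ be infinite and enumerate $Y=\{y_\alpha:\alpha<\lambda\}$ injectively, with witnessing neighbourhoods $U_\alpha:=U_{y_\alpha}$. Suppose $J\in[\lambda]^\lambda$ satisfies $y_\alpha\in U_\beta$ for all $\alpha,\beta\in J$. Since $|J|\ge 2$, pick distinct $\alpha,\beta\in J$. Then $y_\alpha\in U_{y_\beta}$ and $y_\beta\in U_{y_\alpha}$, which contradicts weak separation. Hence $\lambda\notin\SC(Y)$. In fact the same argument shows that no infinite cardinal $\mu\le|Y|$ is a super caliber of $Y$: restrict to any $\mu$ distinct points.

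Now let $\kappa\in\SC(X)$ and suppose, towards a contradiction, that $\cf(\kappa)\le nw(X)=|X|$. By Proposition~\ref{prop_CF_basico}(2), $\cf(\kappa)\in\SC(X)$. Choose a subspace $Y\subseteq X$ with $|Y|=\cf(\kappa)$, which is possible because $\cf(\kappa)\le|X|$. By Proposition~\ref{prop_CF_basico}(1), $Y$ has super caliber $\cf(\kappa)=|Y|$. But $Y$ is an infinite weakly separated space, so by the previous paragraph $|Y|$ is not a super caliber for $Y$. This contradiction shows $\cf(\kappa)>nw(X)$, which gives the left-to-right inclusion.

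Alternatively, one can skip the passage to a subspace by applying the pairwise argument directly to $\cf(\kappa)$ many distinct points of $X$ with their weak-separation neighbourhoods. The only delicate points are the identity $nw(X)=|X|$, which is cited from \cite{bell1979}, and ensuring that the chosen points are distinct so that weak separation applies. I expect no real obstacle beyond these.
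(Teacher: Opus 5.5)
Your proof is correct and follows exactly the route the paper indicates: Proposition~\ref{prop_CF_basico}(4) gives $\supseteq$, and $\subseteq$ comes from Proposition~\ref{prop_CF_basico}(2), heredity to a subspace of size $\cf(\kappa)$, and the pairwise argument showing $|Y|\notin\SC(Y)$ for an infinite weakly separated $Y$. One small remark: you only use the trivial inequality $nw(X)\le|X|$, so the nontrivial half of $nw(X)=|X|$ is not needed here; in fact, your argument combined with Proposition~\ref{prop_CF_basico}(4) applied to $nw(X)^+$ reproves that half.
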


The \emph{weak separation number} of $X$ is the cardinal number
$$
R(X) := \sup\{|Y| : Y \text{ is a weakly separated subspace of } X\} + \omega.
$$
If $hd(X)$ and $hL(X)$ denote, respectively, the \emph{hereditary density} and the \emph{hereditary Lindelöf degree} of $X$, it is known that
\[
hd(X)\cdot hL(X) \leq R(X) \leq nw(X)
\]
(see \cite{rios2023}). Consequently, if $\varphi^*(X) := \sup\{\varphi(X^n) : n \in \mathbb{N}\}$, where $\varphi$ is a topological cardinal function (in particular, $nw^*(X) = nw(X)$; see \cite{hodel1984}), it follows that
\[
hd^*(X)\cdot hL^*(X) \leq R^*(X) \leq nw(X).
\]

This brief introduction to weakly separated spaces provides the necessary context for the following result:

\begin{proposition}\label{prop_CF_FCT} If $X$ is a topological space, $n \in \mathbb{N}$, $Y$ is a metrizable or weakly separated subspace of $X^n$, and $\kappa$ is a super caliber for $X$, then $nw(Y) < \cf(\kappa)$. Consequently,
\[
R^*(X) \leq s\check{s}(X) \leq nw(X).
\]
\end{proposition}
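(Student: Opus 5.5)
The plan is to reduce everything to Propositions~\ref{prop_CF_metrizable} and~\ref{prop_CF_deb_sep} via the permanence properties in Proposition~\ref{prop_CF_basico}.

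\textbf{Step 1.} Let $\kappa$ be a super caliber for $X$. By Proposition~\ref{prop_CF_basico}(6), applied to $n$ copies of $X$, the power $X^n$ has super caliber $\kappa$. By Proposition~\ref{prop_CF_basico}(1), so does every subspace $Y \subseteq X^n$.

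\textbf{Step 2.} If $Y$ is finite, then $nw(Y)$ is finite, or $\omega$ under the usual convention. Since $\kappa$ is an infinite cardinal, $\cf(\kappa)\ge\omega$, and the only care needed is the convention $nw \ge \omega$. In that case I would argue directly: $X$ itself is nonempty with super caliber $\kappa$, and Proposition~\ref{prop_CF_basico}(3) gives $d(Y')<\cf(\kappa)$ for subspaces $Y'$. I expect a finite-$Y$ clause to be handled trivially, or that the statement implicitly takes $Y$ infinite. If $Y$ is infinite and metrizable, then $\kappa\in\SC(Y)$ and Proposition~\ref{prop_CF_metrizable} give $\cf(\kappa)>nw(Y)$. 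If $Y$ is infinite and weakly separated, Proposition~\ref{prop_CF_deb_sep} gives the same conclusion. This proves the first assertion.

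\textbf{Step 3 (the inequality).} The bound $s\check{s}(X)\le nw(X)$ was already noted from Proposition~\ref{prop_CF_basico}(1),(4). For $R^*(X)\le s\check{s}(X)$, let $\lambda=s\check{s}(X)$, so $\lambda^+$ is a super caliber for $X$, and $\cf(\lambda^+)=\lambda^+$ since successors are regular. Take any $n$ and any weakly separated $Y\subseteq X^n$. By Step~2, $|Y|=nw(Y)<\lambda^+$ (using $|Y|=nw(Y)$ for weakly separated spaces), so $|Y|\le\lambda$. Hence $R(X^n)\le\lambda+\omega=\lambda$, and taking the supremum over $n$ gives $R^*(X)\le\lambda$.

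The main obstacle is mild: handling the finite/countable edge cases and making sure the definition of $s\check{s}$ via successor cardinals yields exactly the non-strict bound. The regularity of $\lambda^+$ handles the latter.
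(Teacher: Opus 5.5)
Your proposal is correct and follows the paper's proof: pass to $X^n$ via Proposition~\ref{prop_CF_basico}(6) and heredity, then apply Proposition~\ref{prop_CF_metrizable} or Proposition~\ref{prop_CF_deb_sep} to infinite $Y$. Your Step~3 spells out the ``in particular'' that the paper leaves implicit, using the regularity of $\lambda^+$ and $|Y|=nw(Y)$, and your appeal to Proposition~\ref{prop_CF_basico}(3) for finite $Y$ does no work and can be dropped, since the paper likewise treats only infinite $Y$.
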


\begin{proof}
First, observe that the power $X^n$ has super caliber $\kappa$, by Proposition~\ref{prop_CF_basico}(6). Since this property is hereditary, $Y$ also has super caliber $\kappa$. Therefore, if $Y$ is infinite, Proposition~\ref{prop_CF_metrizable} or Proposition~\ref{prop_CF_deb_sep} guarantees that $\cf(\kappa) > nw(Y)$. In particular, $R^*(X) \leq s\check{s}(X)$.
\end{proof}

\begin{corollary}\label{cor_CF_potencias_hd_hL} If a topological space $X$ has super caliber $\omega_1$, then all finite powers of $X$ are hereditarily separable and hereditarily Lindelöf.

\end{corollary}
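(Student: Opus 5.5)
The plan is to derive Corollary~\ref{cor_CF_potencias_hd_hL} directly from Proposition~\ref{prop_CF_FCT} together with the inequality $hd^*(X)\cdot hL^*(X) \leq R^*(X)$ recalled just before that proposition.

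\textbf{Step 1.} Assume $X$ has super caliber $\omega_1$. By the definition of the super \v{S}anin number, $s\check{s}(X)$ is the least infinite cardinal $\kappa$ such that $\kappa^+$ is a super caliber for $X$. Since $\omega^+ = \omega_1$ is a super caliber, we get $s\check{s}(X) = \omega$.

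\textbf{Step 2.} Proposition~\ref{prop_CF_FCT} gives $R^*(X) \leq s\check{s}(X) = \omega$. Combined with $hd^*(X)\cdot hL^*(X) \leq R^*(X)$, this yields
\[
\sup\{hd(X^n)\cdot hL(X^n) : n \in \mathbb{N}\} \leq \omega .
\]
Hence $hd(X^n)\le\omega$ and $hL(X^n)\le\omega$ for every $n\in\mathbb{N}$. This says exactly that every finite power $X^n$ is hereditarily separable and hereditarily Lindelöf.

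\textbf{Alternative for $hd$.} One can also argue more directly. By Proposition~\ref{prop_CF_basico}(6), $X^n$ has super caliber $\omega_1$. Then Proposition~\ref{prop_CF_basico}(3) says every subspace $Y$ of $X^n$ satisfies $d(Y) < \cf(\omega_1) = \omega_1$, so $Y$ is separable. For hereditary Lindelöfness, the cleanest route is still through $R$. Suppose $X^n$ were not hereditarily Lindelöf. Then it contains an uncountable right-separated subspace, which is weakly separated. This contradicts Proposition~\ref{prop_CF_FCT}, since an uncountable weakly separated $Y$ has $nw(Y)=|Y|\ge\omega_1=\cf(\omega_1)$.

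\textbf{Main obstacle.} The only delicate point is the inequality $hd\cdot hL \le R$, which the paper cites from \cite{rios2023}. In the direct route above it is replaced by the classical facts that a non-hereditarily-separable space has an uncountable left-separated subspace, and a non-hereditarily-Lindelöf space has an uncountable right-separated subspace. Both kinds of subspace are weakly separated, so either route closes the argument.
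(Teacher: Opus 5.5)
Your proof is correct and matches the paper's route: the corollary is read off from $R^*(X) \leq s\check{s}(X)$ in Proposition~\ref{prop_CF_FCT} (with $s\check{s}(X)=\omega$) together with $hd^*(X)\cdot hL^*(X) \leq R^*(X)$, exactly as in your Steps 1--2. Your alternative argument, using Proposition~\ref{prop_CF_basico}(3) for $hd$ and the existence of an uncountable right-separated, hence weakly separated, subspace when hereditary Lindel\"ofness fails, is also valid but not needed.
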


Corollary~\ref{cor_CF_subespacio_nw} follows from Proposition~\ref{prop_CF_basico}(4) and Proposition~\ref{prop_CF_FCT}:

\begin{corollary}\label{cor_CF_subespacio_nw} If a topological space $X$ has a metrizable or weakly separated subspace $Y$ such that $nw(Y) = nw(X)$, then
\[
\SC(X) = \{\kappa \in \CN : \cf(\kappa) > nw(X)\}.
\]

\end{corollary}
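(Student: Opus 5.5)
We prove the two inclusions separately, using only Proposition~\ref{prop_CF_basico}(4) and Proposition~\ref{prop_CF_FCT}.

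\emph{Right-to-left inclusion.} Let $\kappa \in \CN$ with $\cf(\kappa) > nw(X)$. Then $\kappa$ is a super caliber for $X$ directly by Proposition~\ref{prop_CF_basico}(4); this uses no hypothesis on $Y$.

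\emph{Left-to-right inclusion.} Fix $\kappa \in \SC(X)$.
\begin{enumerate}
\item Regard $Y$ as a subspace of $X^1 = X$, so that Proposition~\ref{prop_CF_FCT} applies with $n = 1$.
\item Since $Y$ is a metrizable or weakly separated subspace and $\kappa$ is a super caliber for $X$, that proposition gives $nw(Y) < \cf(\kappa)$.
\item By hypothesis $nw(Y) = nw(X)$, hence $\cf(\kappa) > nw(X)$, as required.
\end{enumerate}

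The only point needing care is the finite case in Proposition~\ref{prop_CF_FCT}. Its proof invokes Propositions~\ref{prop_CF_metrizable} and~\ref{prop_CF_deb_sep}, which assume $Y$ infinite. Net weight is conventionally taken to be at least $\omega$, and every infinite cardinal satisfies $\cf(\kappa) \geq \omega$. So the inequality $nw(Y) < \cf(\kappa)$ for finite $Y$ reduces to excluding $\cf(\kappa) = \omega$.

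To exclude it, we use the case split already contained in the hypothesis.
\begin{itemize}
\item If $Y$ is infinite, Proposition~\ref{prop_CF_FCT} applies verbatim.
\item If $Y$ is finite, then $nw(X) = nw(Y) = \omega$ by that convention. We must show $\omega \notin \SC(X)$ whenever $X$ is infinite.
\end{itemize}

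For this last claim, we expect to use an infinite $T_1$-type subspace or a cellular sequence. Failing that, we would interpret the corollary, as the paper implicitly does, with $Y$ infinite. This degenerate case is the only obstacle; the main argument is a two-line combination of the cited results.
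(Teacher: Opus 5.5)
Your main argument is the paper's own proof. The paper obtains this corollary in one line from Proposition~\ref{prop_CF_basico}(4) for the inclusion $\supseteq$, and from Proposition~\ref{prop_CF_FCT} with $n=1$ for $\subseteq$, exactly as you do.

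Your concern about finite $Y$ is legitimate, since the proof of Proposition~\ref{prop_CF_FCT} only treats infinite $Y$. However, the patch you sketch cannot work as stated, because in that case the corollary is actually false. Under the convention $nw\geq\omega$, every space $X$ with a finite network has $\SC(X)=\CN$. Indeed, given $x_\alpha\in U_\alpha$, choose $N_\alpha$ in the finite network with $x_\alpha\in N_\alpha\subseteq U_\alpha$. Some fiber $J$ of $\alpha\mapsto N_\alpha$ has size $\kappa$, and then $x_\alpha\in N_\alpha=N_\beta\subseteq U_\beta$ for all $\alpha,\beta\in J$.

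Now take $X$ finite, or $X$ an infinite indiscrete space, and let $Y$ be a singleton. The hypotheses hold with $nw(Y)=nw(X)=\omega$, yet $\omega\in\SC(X)$. So your intended claim ``$\omega\notin\SC(X)$ whenever $X$ is infinite'' fails without a separation axiom, and the finite-$X$ case, which your case split never addresses, is a genuine counterexample.

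What is true is that the degenerate case is harmless for infinite $T_1$ spaces $X$. Pick distinct points $x_n$ and set $U_n:=X\setminus\{x_m:m<n\}$. Then $x_\alpha\notin U_\beta$ whenever $\alpha<\beta$, so $\omega\notin\SC(X)$, and Proposition~\ref{prop_CF_basico}(2) then excludes every $\kappa$ of countable cofinality. Alternatively, replace $Y$ by a countably infinite subspace, which is weakly separated because it is countable and $T_1$, and apply Proposition~\ref{prop_CF_FCT} to it.

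So your fallback is the correct reading. The corollary, like Proposition~\ref{prop_CF_FCT}, tacitly requires $Y$ infinite, equivalently that $X$ has no finite network, or else $X$ infinite and $T_1$. Under that reading your two-line proof is complete.
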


For example, the Sorgenfrey plane and the Moore--Niemytzki plane are separable Tychonoff spaces whose collections of super calibers are $\{\mu \in \CN : \cf(\mu) > \mathfrak{c}\}$, since both contain a discrete subspace of cardinality $\mathfrak{c}$. %The double date also.

On the other hand, as usual, the symbol $\beta \omega$ denotes the \emph{Stone--\v{C}ech compactification} of the discrete space of cardinality $\omega$, while $\omega^*$ denotes the remainder of this compactification. It is well known that $nw(\beta \omega) = \mathfrak{c} = nw(\omega^*)$, and that $\omega^*$ admits a cellular family of cardinality $\mathfrak{c}$; in particular, $\omega^*$ contains a discrete subspace of cardinality $\mathfrak{c}$. Therefore, Corollary~\ref{cor_CF_subespacio_nw} implies that
\[
\SC(\beta \omega) = \{\kappa \in \CN : \cf(\kappa) > \mathfrak{c}\} = \SC(\omega^*).
\]

It is also worth noting that there is an important class of compact Hausdorff spaces to which Corollary~\ref{cor_CF_subespacio_nw} applies. We say that a compact $T_2$ space is \emph{polyadic} if it is a continuous image of a power of the one-point compactification of an infinite discrete space. If $X$ is a polyadic compact space, then $X$ contains a discrete subspace of cardinality $w(X)$ (see \cite{gerlits1978}). This, together with Corollary~\ref{cor_CF_subespacio_nw} and the fact that $nw(X) = w(X)$ whenever $X$ is compact and $T_2$ (see \cite{hodel1984}), yields the following corollary:

\begin{corollary} If $X$ is an infinite polyadic compact space, then
\[
\SC(X) = \{\kappa \in \CN : \cf(\kappa)>nw(X)\}.
\]

\end{corollary}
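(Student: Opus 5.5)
The plan is to apply Corollary~\ref{cor_CF_subespacio_nw} directly. That requires a discrete subspace $Y \subseteq X$ with $nw(Y) = nw(X)$. Since $X$ is an infinite polyadic compact space, the cited result of Gerlits \cite{gerlits1978} gives a discrete subspace $D \subseteq X$ with $|D| = w(X)$. Because $X$ is compact and $T_2$, we also have $nw(X) = w(X)$ (see \cite{hodel1984}). Thus $|D| = nw(X)$.

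Next I check that $D$ is a legitimate witness for the corollary. A discrete space is weakly separated: take $U_x = \{x\}$ for each $x \in D$. A discrete space is also metrizable, so either clause of Corollary~\ref{cor_CF_subespacio_nw} applies. For a discrete space, net weight equals cardinality, because every network must contain each singleton. Hence $nw(D) = |D| = nw(X)$.

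There is one point to watch: Proposition~\ref{prop_CF_metrizable} and Proposition~\ref{prop_CF_deb_sep}, which underlie the corollary, require the subspace to be infinite. Since $X$ is infinite and compact Hausdorff, $w(X) \geq \omega$, so $D$ is infinite. Applying Corollary~\ref{cor_CF_subespacio_nw} with $Y = D$ then yields $\SC(X) = \{\kappa \in \CN : \cf(\kappa) > nw(X)\}$.

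The only nontrivial input is the existence of a discrete subspace of size $w(X)$ in a polyadic compactum. This is imported from \cite{gerlits1978}, so it is not an obstacle for the proof. The remaining steps are bookkeeping: matching cardinalities via $nw = w$ for compacta and $nw = |\cdot|$ for discrete spaces.
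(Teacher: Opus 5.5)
Your proposal is correct and follows the paper's argument: take Gerlits' discrete subspace of cardinality $w(X)$, use $nw(X)=w(X)$ for compact Hausdorff spaces, and apply Corollary~\ref{cor_CF_subespacio_nw}. Your added checks (a discrete space is metrizable and weakly separated with $nw(D)=|D|$, and $D$ is infinite) are exactly the bookkeeping the paper leaves implicit.
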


The class of polyadic compact spaces is also broad. To see this, observe that if $\kappa \geq \omega$ and $\alpha D(\kappa)$ denotes the Alexandroff compactification of the discrete space of cardinality $\kappa$, then the function $f: \alpha D(\kappa) \to D(2)$ defined by $f(0) = 0$ and $f[\alpha D(\kappa) \setminus \{0\}] \subseteq \{1\}$ is continuous and surjective. Consequently, every dyadic compact space (that is, every continuous $T_2$ image of a Cantor cube) is polyadic; in particular, every compact topological group belongs to this class (see \cite{shakhmatov1994}).

Returning to our discussion, it is well known that if $X$ is a topological space, $D$ is a dense subspace of $X$, and $\kappa$ is a caliber for $D$, then $\kappa$ is also a caliber for $X$. However, the converse does not hold in general. For example, if $\kappa$ is a cardinal number with $\cf(\kappa)>\omega$, then in the Cantor cube of weight $\kappa$, $D(2)^{\kappa}$, the $\Sigma$-product
\[
\Sigma(\kappa) := \{x \in D(2)^{\kappa} : |x^{-1}\{1\}| \leq \omega\},
\]
is a dense subspace of $D(2)^{\kappa}$ that does not have caliber $\kappa$, even though $D(2)^{\kappa}$ does have caliber $\kappa$ (see \cite{shelah1977}).

For the corresponding question concerning super calibers, it is immediate that, since the property is hereditary, every dense subspace of a space with super caliber $\kappa$ also has super caliber $\kappa$. A natural question is whether the converse holds. We present a result in this direction below. Recall that, if $\kappa$ is a cardinal, the symbol $\beth_2(\kappa)$ denotes the cardinal $2^{2^\kappa}$.

\begin{proposition}\label{prop_CF_denso} The following statements hold for every infinite cardinal $\kappa$.

\begin{enumerate}
\item There exists a compact Hausdorff space $X_\kappa$ such that $d(X_\kappa)=\kappa$ and
$$
\SC(X_\kappa)=\{\mu\in\CN : \cf(\mu)>\kappa\}.
$$

\item For every cardinal $\lambda$ with $\kappa\leq\lambda\leq\beth_2(\kappa)$, there exists a Urysohn space $X(\kappa,\lambda)$ such that $d(X(\kappa,\lambda))=\kappa$ and $$\SC(X(\kappa,\lambda))=\{\mu\in\CN : \cf(\mu)>\lambda\}.$$

\end{enumerate}

\end{proposition}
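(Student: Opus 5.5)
For (1), I will take $X_\kappa:=\beta D(\kappa)$, the Stone--\v{C}ech compactification of the discrete space of cardinality $\kappa$. It is compact Hausdorff, and $D(\kappa)$ is dense in it, so $d(X_\kappa)\le\kappa$. Since $D(\kappa)$ is a discrete space of cardinality $\kappa$ and $X_\kappa$ is $T_2$ with a dense set of isolated points, every dense set contains $D(\kappa)$, so $d(X_\kappa)=\kappa$. To compute $\SC(X_\kappa)$ I will use Corollary~\ref{cor_CF_subespacio_nw}. This requires two facts: $nw(\beta D(\kappa))=w(\beta D(\kappa))=2^\kappa$ (compact $T_2$), and $\beta D(\kappa)$ contains a discrete subspace of cardinality $2^\kappa$. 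The second fact comes from the remainder: as for $\omega^*$, the remainder of $\beta D(\kappa)$ carries a cellular family of size $2^\kappa$, obtained from an almost disjoint family of $2^\kappa$ subsets of $\kappa$ of size $\kappa$.

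This computation gives $\SC=\{\mu:\cf(\mu)>2^\kappa\}$, not the threshold $\kappa$ demanded by the statement. So $\beta D(\kappa)$ is the wrong space, and I will instead take $X_\kappa:=\alpha D(\kappa)$, the one-point compactification. It is compact $T_2$, has density $\kappa$, and has $nw=w=\kappa$. It also contains the discrete subspace $D(\kappa)$ with $nw(D(\kappa))=\kappa=nw(X_\kappa)$. Corollary~\ref{cor_CF_subespacio_nw} then gives exactly $\SC(X_\kappa)=\{\mu:\cf(\mu)>\kappa\}$.

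For (2), I need a space of density $\kappa$ whose net weight can be pushed anywhere up to $\beth_2(\kappa)$, which is the maximal possible cardinality (and hence net weight) of a Hausdorff space of density $\kappa$. I will try to arrange $|X(\kappa,\lambda)|=nw(X(\kappa,\lambda))=\lambda$, with a discrete subspace of size $\lambda$ witnessing this. The candidate construction starts from $\beta D(\kappa)$, which has cardinality $\beth_2(\kappa)$. Write it as $D(\kappa)\cup R$ with $R$ the remainder. Choose $A\subseteq R$ with $|A|=\lambda$ and set $Y=D(\kappa)\cup A$ as a subspace; this is Tychonoff with density $\kappa$. The space $Y$ would be weakly separated if $A$ were discrete, but it is not in general, so I will retopologize instead. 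I will keep the points of $D(\kappa)$ isolated, and give each $p\in A$ the neighbourhoods $\{p\}\cup(U\cap D(\kappa))$ for $U$ ranging over open sets in $\beta D(\kappa)$ containing $p$. This is a standard ``$\Psi$-like'' space over ultrafilters.

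With this topology, $A$ is a closed discrete subspace of size $\lambda$, and $D(\kappa)$ is dense, so the density is $\kappa$. Moreover, every point $p$ has a neighbourhood $W_p$ with $W_p\cap A=\{p\}$, so the whole space is weakly separated via these neighbourhoods (the points of $D(\kappa)$ use singletons). Proposition~\ref{prop_CF_deb_sep} then gives $\SC=\{\mu:\cf(\mu)>nw\}$ with $nw=|X|=\kappa+\lambda=\lambda$.

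It remains to check the Urysohn separation. Two isolated points are trivial. For an isolated $d$ and a $p\in A$, use $\{d\}$ and a neighbourhood of $p$ missing $d$. For distinct $p,q\in A$, pick disjoint clopen sets $C,C'\subseteq\beta D(\kappa)$ separating the ultrafilters. The neighbourhoods $\{p\}\cup(C\cap D(\kappa))$ and $\{q\}\cup(C'\cap D(\kappa))$ have closures in the new topology contained in $\{p\}\cup C\cap(D(\kappa)\cup A)$ and the analogous set for $q$. These closures are disjoint because $C\cap C'=\emptyset$, which gives Urysohn, and indeed the space is even Hausdorff-regularish.

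The main obstacle is verifying the closure computation precisely. The closure of $C\cap D(\kappa)$ in the new topology consists of points $r\in A$ with $C\in r$, and this set lies within $C$, so disjointness holds. Once that is checked, both items follow from the earlier propositions.
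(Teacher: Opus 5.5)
Your final argument is correct and essentially the paper's. In (2), your retopologized $D(\kappa)\cup A$ is exactly the paper's subspace $\kappa\cup\mathbb{F}_\lambda$ of the Kat\v{e}tov extension, since the sets $\{p\}\cup(U\cap D(\kappa))$ are just $\{p\}\cup F$ with $F\in p$; in (1) the paper uses $[0,\kappa]$ instead of $\alpha D(\kappa)$ with the same Corollary~\ref{cor_CF_subespacio_nw} argument, and the only other difference is that in (2) you invoke weak separation of the whole space and Proposition~\ref{prop_CF_deb_sep}, whereas the paper applies Corollary~\ref{cor_CF_subespacio_nw} to the closed discrete set $\mathbb{F}_\lambda$. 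You should delete the discarded $\beta D(\kappa)$ detour, because its justification fails for uncountable $\kappa$ in general: a cellular family in the remainder yields a family of infinite subsets of $\kappa$ with pairwise finite intersections, such a family has size at most $\kappa^\omega$, and $\kappa^\omega$ can be smaller than $2^\kappa$.
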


\begin{proof} To begin, let $X_\kappa$ be the ordinal space $[0,\kappa]$ (that is, the set $\kappa \cup \{\kappa\}$ equipped with the order topology). Then $X_\kappa$ is compact and $T_2$. Moreover, $\{\alpha+1 : \alpha<\kappa\}$ is a discrete and dense subspace of $X_\kappa$, consisting of isolated points and having cardinality $\kappa=d(X_\kappa)=nw(X_\kappa)$. Therefore, by Corollary~\ref{cor_CF_subespacio_nw},
\[
\SC(X_\kappa)=\{\mu\in\CN : \cf(\mu)>\kappa\}.
\]

Now, let $\lambda$ be a cardinal such that $\kappa \leq \lambda \leq \beth_2(\kappa)$. The space $X(\kappa,\lambda)$ is a subspace of the Kat\v{e}tov extension of the discrete space of cardinality $\kappa$. More precisely, let
\[
\mathbb{F} := \{\mathcal{F} : \text{$\mathcal{F}$ is a free ultrafilter on $\kappa$}\},
\]
let $\mathbb{F}_\lambda$ be a subset of $\mathbb{F}$ of cardinality $\lambda$, and let $X(\kappa,\lambda)$ be the set $\kappa \cup \mathbb{F}_\lambda$ equipped with the following topology: each point of $\kappa$ is isolated, and, for each $\mathcal{F} \in \mathbb{F}_\lambda$, the family $\{F \cup \{\mathcal{F}\} : F \in \mathcal{F}\}$ is a local base for $\mathcal{F}$ in $X(\kappa,\lambda)$.

The space $X(\kappa,\lambda)$ is Urysohn, $\kappa$ is a dense subspace of $X(\kappa,\lambda)$ consisting of isolated points (in particular, $d(X(\kappa,\lambda))=\kappa$), and $\mathbb{F}_\lambda$ is a closed discrete subspace of $X(\kappa,\lambda)$ of cardinality $\lambda$.

Finally, since $\lambda = |\mathbb{F}_\lambda| = nw(\mathbb{F}_\lambda) \leq nw(X(\kappa,\lambda)) \leq |X(\kappa,\lambda)| = \lambda$, it follows that $\mathbb{F}_\lambda$ is a discrete subspace of $X(\kappa,\lambda)$ of cardinality $nw(X(\kappa,\lambda))$. Hence, by Corollary~\ref{cor_CF_subespacio_nw},
\[
\SC(X(\kappa,\lambda))=\{\mu\in\CN : \cf(\mu)>\lambda\}.
\]
\end{proof}

Proposition~\ref{prop_CF_denso} provides examples showing that super calibers of dense subspaces need not transfer to the entire space:

\begin{corollary}\label{cor_CF_denso} For every infinite cardinal $\kappa$, the space $X(\kappa,\beth_2(\kappa))$ described in Proposition~\ref{prop_CF_denso} has a dense subspace with super caliber $\kappa^+$, but $X(\kappa,\beth_2(\kappa))$ does not have super caliber $\kappa^+$ or $\kappa^{++}$.

\end{corollary}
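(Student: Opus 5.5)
The plan is to apply Proposition~\ref{prop_CF_denso}(2) with $\lambda=\beth_2(\kappa)$; this is allowed because $\kappa\leq\beth_2(\kappa)\leq\beth_2(\kappa)$. For that choice it gives
\[
\SC(X(\kappa,\beth_2(\kappa)))=\{\mu\in\CN : \cf(\mu)>\beth_2(\kappa)\},
\]
so it remains to find the dense subspace and to check that $\kappa^+$ and $\kappa^{++}$ lie outside this set.

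For the dense subspace I will use $\kappa$ itself, which the construction in Proposition~\ref{prop_CF_denso} makes a dense set of isolated points. As a subspace it is discrete of cardinality $\kappa$, so $nw(\kappa)=\kappa$. Since $\kappa^+$ is regular, $\cf(\kappa^+)=\kappa^+>\kappa=nw(\kappa)$. Proposition~\ref{prop_CF_basico}(4) then says that $\kappa$ has super caliber $\kappa^+$. One could also appeal to Proposition~\ref{prop_CF_deb_sep} or Proposition~\ref{prop_CF_metrizable}, since a discrete space is both weakly separated and metrizable.

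For the negative part, note that $\kappa^+$ and $\kappa^{++}$ are successor cardinals, hence regular, so their cofinalities are themselves. Cantor's theorem gives $\kappa<2^\kappa<2^{2^\kappa}$. Because $2^\kappa\geq\kappa^+$, we get $\beth_2(\kappa)\geq(2^\kappa)^+\geq\kappa^{++}$. Therefore
\[
\cf(\kappa^+)=\kappa^+<\kappa^{++}\leq\beth_2(\kappa)
\quad\text{and}\quad
\cf(\kappa^{++})=\kappa^{++}\leq\beth_2(\kappa).
\]
Neither cardinal has cofinality above $\beth_2(\kappa)$, so neither belongs to $\SC(X(\kappa,\beth_2(\kappa)))$.

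The only step needing any care is the inequality $\kappa^{++}\leq\beth_2(\kappa)$. It follows from applying Cantor's theorem twice, so I do not expect a real obstacle. Everything else is a direct application of earlier results in the paper.
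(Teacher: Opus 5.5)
Your proposal is correct and matches the argument the paper leaves implicit. You take $\lambda=\beth_2(\kappa)$ in Proposition~\ref{prop_CF_denso}(2), use the dense discrete set $\kappa$ (whose super calibers are exactly the $\mu$ with $\cf(\mu)>\kappa$), and note that $\kappa^{+}<\kappa^{++}\leq\beth_2(\kappa)$ by applying Cantor's theorem twice.
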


Before continuing, it is worth noting that, in the particular case of $\omega_1$, it is not difficult to provide examples of spaces with various interesting properties in which $\omega_1$ does not transfer as a super caliber from a dense subspace:

\begin{example} The Sorgenfrey line, $\mathbb{S}$, is hereditarily separable and hereditarily Lindelöf, $T_5$, and has net weight $\mathfrak{c}$. Since $\mathbb{S}$ is also weakly separated, Proposition~\ref{prop_CF_deb_sep} yields $\SC(\mathbb{S}) = \{\kappa \in \CN : \cf(\kappa)>\mathfrak{c}\}$. On the other hand, the rational numbers $\mathbb{Q}$ form a countable, dense, and weakly separated subspace of $\mathbb{S}$; in particular, the same result gives $\SC(\mathbb{Q}) = \{\kappa \in \CN : \cf(\kappa)>\omega\}$. Consequently, $\omega_1 \in \SC(\mathbb{Q}) \setminus \SC(\mathbb{S})$.

Similarly, if $\mathbb{M}$ denotes the Moore--Niemytzki plane, then, since $\mathbb{Q}^2$ is a dense subspace of both $\mathbb{M}$ and $\mathbb{S}^2$, we also have $\omega_1 \in \SC(\mathbb{Q}^2) \setminus \SC(\mathbb{S}^2)$ and $\omega_1 \in \SC(\mathbb{Q}^2) \setminus \SC(\mathbb{M})$, by the remarks in the paragraph following Corollary~\ref{cor_CF_subespacio_nw}.

\end{example}

Regarding the preservation of super calibers under topological products, two aspects are worth noting. First, Proposition~\ref{prop_CF_basico}(6) states that having super caliber $\kappa$, for a given infinite cardinal $\kappa$, is finitely productive. Second, it follows from the results presented in \cite{rios2022} and \cite{shelah1977} that if $X$ is a topological space with caliber, precaliber, or weak precaliber $\kappa$, where $\cf(\kappa)>\omega$, and $\lambda$ is a cardinal, then the topological power $X^\lambda$ also has the corresponding chain condition.

One might expect, at least, that uncountable regular super calibers would be preserved under topological products. However, it is easy to see that this is not the case, even under minimal assumptions:

\begin{lemma}\label{lema_CF_producto} Let $\lambda$ be an infinite cardinal and let $\{X_\alpha : \alpha < \lambda\}$ be a family of topological spaces. Suppose, moreover, that each space $X_\alpha$ has two disjoint, nonempty open subsets. If $\kappa$ is a cardinal such that $\omega \leq \kappa \leq \lambda$, then $\prod_{\alpha < \lambda} X_\alpha$ does not have super caliber $\kappa$.

\end{lemma}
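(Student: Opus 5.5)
Since $\kappa\le\lambda$, it suffices to produce, inside $X:=\prod_{\alpha<\lambda}X_\alpha$, points $\{x_\beta:\beta<\kappa\}$ and open sets $\{U_\beta:\beta<\kappa\}$ with $x_\beta\in U_\beta$ such that $x_\beta\notin U_\gamma$ whenever $\beta\neq\gamma$. Then no $J\in[\kappa]^\kappa$ (indeed no $J$ with two elements) can satisfy $x_\beta\in U_\gamma$ for all $\beta,\gamma\in J$, so $X$ fails to have super caliber $\kappa$. Equivalently, we want a discrete subspace of $X$ of cardinality $\kappa$ witnessed by basic open sets. This mirrors the proof of Proposition~\ref{prop_CF_metrizable}, where a cellular family was used for the same purpose.

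For each $\alpha<\lambda$, fix disjoint nonempty open sets $V_\alpha^0,V_\alpha^1\subseteq X_\alpha$ and points $p_\alpha^i\in V_\alpha^i$. Work only with the first $\kappa$ coordinates (possible because $\kappa\le\lambda$). For $\beta<\kappa$, define $x_\beta$ by
\[
x_\beta(\beta)=p_\beta^1,\qquad x_\beta(\alpha)=p_\alpha^0 \text{ for } \alpha\neq\beta .
\]
Let $U_\beta:=\pi_\beta^{-1}[V_\beta^1]$, which is a subbasic open set of the product. Clearly $x_\beta\in U_\beta$. If $\gamma\neq\beta$, then $x_\beta(\gamma)=p_\gamma^0\in V_\gamma^0$, which is disjoint from $V_\gamma^1$, so $x_\beta\notin U_\gamma$. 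This is exactly the required configuration.

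There is essentially no obstacle. The only care needed is to define $U_\beta$ using a single coordinate, so that it is open in the Tychonoff product and depends only on coordinate $\beta$. The hypothesis $\omega\le\kappa$ is used only because super calibers are by definition infinite cardinals. As a remark, only $\kappa$ coordinates are used, and no property of the $X_\alpha$ beyond the two disjoint open sets is needed. The conclusion then follows directly from Definition~\ref{def_CF}.
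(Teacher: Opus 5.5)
Your proof is correct and is essentially the paper's argument: your points $x_\beta$ are exactly the images, under the copy of $D(2)^\lambda$ inside $\prod_{\alpha<\lambda}X_\alpha$ determined by the points $p_\alpha^0,p_\alpha^1$, of the characteristic functions of singletons that the paper uses as its discrete subspace of $\sigma(\lambda)$. The only difference is that you check the failure of super caliber $\kappa$ directly from Definition~\ref{def_CF} via the separating open sets $U_\beta$. The paper instead passes through heredity and Proposition~\ref{prop_CF_FCT}, so your version is more self-contained.
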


\begin{proof} First, observe that, under the stated hypotheses, the Cantor cube $D(2)^\lambda$ is homeomorphic to a subspace of the product $\prod_{\alpha < \lambda} X_\alpha$. Since having super caliber $\kappa$ is a hereditary property (see Proposition~\ref{prop_CF_basico}(1)), it suffices to show that $D(2)^\lambda$ does not have super caliber $\kappa$. To this end, we will exhibit a subspace of $D(2)^\lambda$ that does not have super caliber $\kappa$.

For each $\alpha < \kappa$, define the function $x_\alpha : \lambda \to D(2)$ by
\[
x_\alpha(\beta) :=
\begin{cases} 
1, & \text{if } \alpha = \beta, \\
0, & \text{if } \alpha \neq \beta.
\end{cases}
\]
Thus, $\{x_\alpha : \alpha < \kappa\}$ is a discrete subspace of the $\sigma$-product $\sigma(\lambda) := \{x \in D(2)^{\lambda} : |x^{-1}\{1\}| < \omega\}$. Consequently, if $\sigma(\lambda)$ had super caliber $\kappa$, Proposition~\ref{prop_CF_FCT} would imply that $\kappa < \cf(\kappa)$, which is impossible. Therefore, $\prod_{\alpha < \lambda} X_\alpha$ does not have super caliber $\kappa$.
\end{proof}

Based on Lemma~\ref{lema_CF_producto}, it is possible to completely characterize the collection of super calibers for a broad class of topological products:

\begin{proposition}\label{prop_CF_producto} Let $\lambda$ be an infinite cardinal, and let $\{X_\alpha : \alpha < \lambda\}$ be a family of topological spaces. Suppose, moreover, that each space $X_\alpha$ has two disjoint, nonempty open subsets. If $nw(X_\alpha) \leq \lambda$ for every $\alpha < \lambda$, then
\[
\textstyle \SC\big( \prod_{\alpha < \lambda} X_\alpha \big) = \{\mu \in \CN : \cf(\mu) > \lambda\}.
\]

\end{proposition}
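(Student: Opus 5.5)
We prove the two inclusions separately.

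For the right-to-left inclusion, let $X := \prod_{\alpha<\lambda} X_\alpha$. We first bound its net weight. For each $\alpha<\lambda$ fix a network $\mathcal{N}_\alpha$ for $X_\alpha$ with $|\mathcal{N}_\alpha| \leq \lambda$. Consider the family of all finite "boxes" $\bigcap_{\alpha\in F}\pi_\alpha^{-1}[N_\alpha]$, where $F\in[\lambda]^{<\omega}$ and $N_\alpha\in\mathcal{N}_\alpha$.

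This family is a network for $X$. Indeed, given $x\in W$ with $W$ open, pick a basic open set $\bigcap_{\alpha\in F}\pi_\alpha^{-1}[V_\alpha]$ with $x$ in it and contained in $W$. For each $\alpha\in F$ choose $N_\alpha\in\mathcal{N}_\alpha$ with $x(\alpha)\in N_\alpha\subseteq V_\alpha$. The corresponding box then contains $x$ and lies inside $W$.

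The family has cardinality at most $|[\lambda]^{<\omega}|\cdot\lambda^{<\omega}=\lambda$. Hence $nw(X)\leq\lambda$. So if $\cf(\mu)>\lambda$, then $\cf(\mu)>nw(X)$, and Proposition~\ref{prop_CF_basico}(4) gives $\mu\in\SC(X)$.

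For the left-to-right inclusion, suppose $\mu\in\SC(X)$. By Proposition~\ref{prop_CF_basico}(2), $\cf(\mu)$ is also a super caliber for $X$. We show $\cf(\mu)>\lambda$ by contradiction.

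Assume $\cf(\mu)\leq\lambda$. Since super calibers are infinite cardinals, $\omega\leq\cf(\mu)\leq\lambda$. Lemma~\ref{lema_CF_producto}, applied with $\kappa=\cf(\mu)$, then says $X$ does not have super caliber $\cf(\mu)$. This is a contradiction, so $\cf(\mu)>\lambda$.

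An alternative route would apply Proposition~\ref{prop_CF_FCT} directly to the discrete subspace of size $\lambda$ inside the embedded copy of $D(2)^\lambda$. This gives $\lambda=nw(Y)<\cf(\mu)$ at once. Either route works.

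The only step requiring any care is the net weight bound $nw(X)\leq\lambda$. The points to check are that the finite-support boxes really form a network, and the cardinal arithmetic $\lambda^{<\omega}=\lambda$, which holds because $\lambda$ is infinite. The rest is a direct application of earlier results.
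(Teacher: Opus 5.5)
Your proof is correct and follows the same route as the paper. Lemma~\ref{lema_CF_producto} together with Proposition~\ref{prop_CF_basico}(2) excludes every $\mu$ with $\cf(\mu)\leq\lambda$, and the bound $nw(\prod_{\alpha<\lambda}X_\alpha)\leq\lambda$ together with Proposition~\ref{prop_CF_basico}(4) gives the other inclusion; the only addition is that you verify the net weight bound explicitly with finite-support boxes of network elements, which the paper uses without proof.
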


\begin{proof} Let $\mu \in \CN$. On the one hand, if $\cf(\mu) \leq \lambda$, Lemma~\ref{lema_CF_producto} shows that $\cf(\mu)$ is not a super caliber for $\prod_{\alpha < \lambda} X_\alpha$ and, consequently, neither is $\mu$, by Proposition~\ref{prop_CF_basico}(2). On the other hand, the fact that $nw(\prod_{\alpha < \lambda} X_\alpha) \leq \lambda$, together with Proposition~\ref{prop_CF_basico}(4), implies that $\mu$ is a super caliber for $\prod_{\alpha < \lambda} X_\alpha$ whenever $\cf(\mu) > \lambda$.
\end{proof}

%%%%%%%%%%%%%%%%%%%%%%%%%%%%%%%%%%%%%%%%%%%%%%%%%%%%%%%%%%%%%%
\section{Topological hyperspaces}\label{secc_TH}
%%%%%%%%%%%%%%%%%%%%%%%%%%%%%%%%%%%%%%%%%%%%%%%%%%%%%%%%%%%%%%

Let $X$ be a topological space. Denote by $\CL(X)$ the collection of all nonempty closed subsets of $X$, and by $\F(X)$ the subcollection of $\CL(X)$ consisting of all finite subsets. A \emph{hyperspace} is any topological space whose underlying set is $\CL(X)$, or a subspace thereof, equipped with a topology for which the natural map $x\mapsto\overline{\{x\}}$ is a topological embedding. Consequently, whenever $X$ satisfies the $T_0$ axiom, the original space can be identified with a subspace of $\CL(X)$.

If $\mathcal{U}$ is a family of subsets of $X$, define
\[
\textstyle
\vt{\mathcal{U}}
:=
\big\{
A\in\CL(X):
A\subseteq\bigcup\mathcal{U}
\ \wedge\
\forall U\in\mathcal{U}\,(A\cap U\neq\emptyset)
\big\}.
\]
The \emph{Vietoris topology} on $\CL(X)$ is the topology generated by the basis
\[
\textstyle
\big\{
\vt{\mathcal{U}}: \mathcal{U} \in [\tau_X]^{<\omega}
\big\}.
\]

For notational convenience, if $n\in\mathbb{N}$ and $U_1,\dots,U_n\in\tau_X$, we write $\vt{U_1,\dots,U_n}$ instead of $\vt{\{U_1,\dots,U_n\}}$. Thus, for example, for any $U,V\in\tau_X$, we have
\[
\textstyle
\vt{U,V}
=
\big\{
A\in\CL(X):
A\subseteq U\cup V
\ \wedge\
A\cap U\neq\emptyset
\ \wedge\
A\cap V\neq\emptyset
\big\}.
\]

Throughout this section, we assume that $X$ is an infinite $T_1$ topological space. Moreover, the hyperspace $\CL(X)$ and each of its subspaces are equipped with the Vietoris topology.

In \cite{riotam2026}, a thorough and detailed study is carried out of the relation between the calibers, precalibers, and weak precalibers of a topological space and those of its associated Vietoris hyperspaces. To continue this line of investigation in the present work, it is natural to pose the following question:

\begin{question}\label{Q_calibre_fuerte} Let $\kappa$ be an infinite cardinal, and let $\HH(X)$ be a hyperspace satisfying $\F(X) \subseteq \HH(X) \subseteq \CL(X)$. Is it true that, if $\HH(X)$ has super caliber $\kappa$, then $X$ also has super caliber $\kappa$? What about the converse?

\end{question}

To begin, since having super caliber $\kappa$ is a hereditary property, Proposition~\ref{prop_CF_basico}(1) immediately implies that if the hyperspace $\HH(X)$ has super caliber for a fixed infinite cardinal $\kappa$, then $X$ also has super caliber $\kappa$, since $X$ is homeomorphic to a subspace of $\HH(X)$.

The second part of Question~\ref{Q_calibre_fuerte} admits a first step toward a partial answer in the following result:

\begin{proposition}\label{prop_CF_F(X)} Let $\kappa$ be a cardinal with $\cf(\kappa)>\omega$. If $\kappa$ is a super caliber for $X$, then $\kappa$ is a super caliber for $\F(X)$.

\end{proposition}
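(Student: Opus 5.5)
Given points $F_\alpha\in\F(X)$ and Vietoris-open sets $\mathcal{W}_\alpha\ni F_\alpha$ for $\alpha<\kappa$, I will first shrink each $\mathcal{W}_\alpha$ to a basic neighborhood of a canonical form. For each $\alpha$ write $F_\alpha=\{x^\alpha_1,\dots,x^\alpha_{n_\alpha}\}$ with distinct points. Since $X$ is $T_1$, I can choose open sets $U^\alpha_i\ni x^\alpha_i$ such that $\vt{U^\alpha_1,\dots,U^\alpha_{n_\alpha}}\cap \F(X)\subseteq\mathcal{W}_\alpha$; this uses the standard fact that the sets $\vt{U_1,\dots,U_n}$ with $x_i\in U_i$ form a local base at a finite set $\{x_1,\dots,x_n\}$ in the Vietoris topology. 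Shrinking the sets only makes the required conclusion stronger, so it suffices to find $J\in[\kappa]^\kappa$ with $F_\alpha\in\vt{U^\beta_1,\dots,U^\beta_{n_\beta}}$ for all $\alpha,\beta\in J$.

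Next I will use $\cf(\kappa)>\omega$ to make the sizes uniform. The map $\alpha\mapsto n_\alpha$ partitions $\kappa$ into countably many pieces. Because $\cf(\kappa)>\omega$, one piece has cardinality $\kappa$: a union of countably many sets each of size $<\kappa$ has size $<\kappa$ when $\cf(\kappa)>\omega$. So we may fix $n$ and $I\in[\kappa]^\kappa$ with $n_\alpha=n$ for every $\alpha\in I$. Reindexing $I$ by $\kappa$, we have points $x^\alpha_i$ and open sets $U^\alpha_i$ for $i\le n$.

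Now I apply Proposition~\ref{prop_CF_basico}(6): $X^n$ has super caliber $\kappa$. Consider the points $\bar x_\alpha=(x^\alpha_1,\dots,x^\alpha_n)\in X^n$ and the open boxes $V_\alpha=U^\alpha_1\times\dots\times U^\alpha_n\ni\bar x_\alpha$. Super caliber $\kappa$ for $X^n$ gives $J\in[\kappa]^\kappa$ with $\bar x_\alpha\in V_\beta$ for all $\alpha,\beta\in J$. This means $x^\alpha_i\in U^\beta_i$ for every $i$. Hence $F_\alpha\subseteq\bigcup_i U^\beta_i$, and $F_\alpha$ meets each $U^\beta_i$, so $F_\alpha\in\vt{U^\beta_1,\dots,U^\beta_n}\cap\F(X)\subseteq\mathcal{W}_\beta$, as required.

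The main obstacle is the uniformization step, and it is exactly where $\cf(\kappa)>\omega$ is needed. Only finite powers are available, since by Lemma~\ref{lema_CF_producto} infinite products fail, so the number of points must be fixed before passing to $X^n$. The rest is routine bookkeeping about the Vietoris local base at finite sets.
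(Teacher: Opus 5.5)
Your proof is correct and follows the same route as the paper. Both use $\cf(\kappa)>\omega$ to fix the size of $F_\alpha$ on some $I\in[\kappa]^\kappa$, encode each $F_\alpha$ and its basic neighborhood as a point and an open box in $X^n$, and then apply Proposition~\ref{prop_CF_basico}(6).

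The one difference is that you first shrink to a canonical neighborhood $\vt{U_1,\dots,U_n}$ with $x_i\in U_i$, where the paper uses surjections $e_\alpha\colon\{1,\dots,m\}\to\{1,\dots,n\}$. Your choice is the safer one: the paper's surjection cannot exist when the given basic neighborhood uses more open sets than $F_\alpha$ has points ($n>m$), and your reduction avoids that case entirely.
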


\begin{proof} Let $\{F_\alpha : \alpha < \kappa\} \subseteq \F(X)$ and $\{U_\alpha : \alpha < \kappa\} \subseteq \tau_{\F(X)}$ be such that $F_\alpha \in U_\alpha$ for every $\alpha < \kappa$. Then, for each $\alpha < \kappa$, there exist $n_\alpha \in \mathbb{N}$ and $\{U(\alpha,1),\ldots,U(\alpha,n_\alpha)\} \subseteq \tau_X$ such that
$F_\alpha \in \vt{U(\alpha,1),\ldots,U(\alpha,n_\alpha)}_{\F} \subseteq U_\alpha$. Since the function $\kappa \to \mathbb{N} \times \mathbb{N}$ given by $\alpha \mapsto (|F_\alpha|, n_\alpha)$ has a fiber of cardinality $\kappa$, there exist $I \in [\kappa]^\kappa$ and $m,n \in \mathbb{N}$ such that $|F_\alpha| = m$ and $n_\alpha = n$ for every $\alpha \in I$.

Now, for each $\alpha \in I$, let $\{x(\alpha,1), \ldots,x(\alpha,m)\}$ be an enumeration without repetitions of $F_\alpha$, and let $e_\alpha : \{1,\ldots,m\} \to \{1,\ldots,n\}$ be a surjective function such that $x(\alpha,i) \in U(\alpha,e_\alpha(i))$ for every $i \in \{1,\ldots,m\}$. Next, for each $\alpha \in I$, define
$$
x_\alpha := (x(\alpha,1), \ldots, x(\alpha,m))
\quad \text{and} \quad
V_\alpha := U(\alpha,e_\alpha(1)) \times \cdots \times U(\alpha,e_\alpha(m)).
$$

Observe that $\{V_\alpha : \alpha \in I\}$ is a family of open subsets of $X^m$ satisfying $x_\alpha \in V_\alpha$ for every $\alpha \in I$.

Since $X^m$ has super caliber $\kappa$ (see Proposition~\ref{prop_CF_basico}(6)), there exists $J \in [I]^\kappa$ such that $x_\alpha \in V_\beta$ for all $\alpha,\beta \in J$. Finally, if $\alpha,\beta \in J$, then $F_\alpha \in U_\beta$, since
\begin{align*}
\{x(\alpha,1), \ldots, x(\alpha,m)\}
&\in \vt{U(\beta,e_\beta(1)), \ldots, U(\beta,e_\beta(m))}_{\F} \\
&= \vt{U(\beta,1), \ldots, U(\beta,n)}_{\F}
\subseteq U_\beta.
\end{align*}
Thus, $\F(X)$ has super caliber $\kappa$.
\end{proof}

Since every (super) caliber of an infinite $T_2$ space must have uncountable cofinality, Proposition~\ref{prop_CF_F(X)} yields the following result:

\begin{corollary}\label{cor_CF_F(X)} If $X$ is a Hausdorff space, then
$$
\SC(X) = \SC(\F(X)).
$$

\end{corollary}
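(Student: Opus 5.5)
The plan is to prove the two inclusions separately, using Proposition~\ref{prop_CF_F(X)} for one direction and heredity for the other. The only gap between them is the hypothesis $\cf(\kappa)>\omega$ in Proposition~\ref{prop_CF_F(X)}, which we will remove using the Hausdorff assumption.

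\emph{Inclusion $\SC(\F(X))\subseteq\SC(X)$.} Since $X$ is $T_1$, the map $x\mapsto\{x\}$ embeds $X$ into $\F(X)$. By Proposition~\ref{prop_CF_basico}(1), every super caliber of $\F(X)$ is therefore a super caliber of $X$.

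\emph{Inclusion $\SC(X)\subseteq\SC(\F(X))$.} Let $\kappa\in\SC(X)$. By Proposition~\ref{prop_CF_F(X)}, it suffices to show that $\cf(\kappa)>\omega$. By Proposition~\ref{prop_CF_basico}(2), $\cf(\kappa)$ is also a super caliber for $X$. So the task reduces to showing that $\omega$ is not a super caliber of an infinite Hausdorff space.

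\emph{Key step: $\omega\notin\SC(X)$.} An infinite Hausdorff space contains an infinite cellular family. We build it recursively as follows.
\begin{itemize}
\item[(a)] Pick distinct points $p,q$ and disjoint open sets around them.
\item[(b)] One of these two open sets, or the complement of the closure of the other, has infinitely many points. Keep a disjoint open set on one side and recurse inside the infinite side.
\end{itemize}
Running this recursion yields pairwise disjoint nonempty open sets $\{U_n:n<\omega\}$.

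Now choose $x_n\in U_n$. For any infinite $J\subseteq\omega$ and distinct $\alpha,\beta\in J$, we have $x_\alpha\notin U_\beta$, because the sets are disjoint. So these points and open sets witness that $\omega\notin\SC(X)$. Equivalently, $\{x_n:n<\omega\}$ is an infinite discrete subspace, and Proposition~\ref{prop_CF_metrizable} or Proposition~\ref{prop_CF_deb_sep} applies directly.

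Hence $\cf(\kappa)>\omega$, and Proposition~\ref{prop_CF_F(X)} gives $\kappa\in\SC(\F(X))$.

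The main obstacle is this key step: justifying the infinite cellular family (or infinite discrete subspace) in an arbitrary infinite Hausdorff space. It is standard, but the recursion in (b) has to be arranged so that an infinite remainder survives at each stage.
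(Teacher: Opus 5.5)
Your proposal is correct and takes essentially the paper's route: heredity through the embedding $x\mapsto\{x\}$ gives one inclusion, and Proposition~\ref{prop_CF_F(X)} together with the fact that every super caliber of an infinite Hausdorff space has uncountable cofinality gives the other. The paper only asserts that cofinality fact, and you justify it correctly via Proposition~\ref{prop_CF_basico}(2) and an infinite cellular family; your recursion works because when both $U$ and $V$ are finite, $U$ is closed and so $X\setminus U$ is an infinite open set disjoint from $U$.
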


Let $\K(X)$ denote the subspace of $\CL(X)$ consisting of the nonempty compact closed subsets of $X$. The following natural question then arises: does Corollary~\ref{cor_CF_F(X)} remain valid if the hyperspace $\F(X)$ is replaced by the hyperspace $\K(X)$? The purpose of the following paragraphs is to introduce the concepts needed to give a negative answer.

According to \cite{mpp2018}, given a Hausdorff space $X$, a set $S \in [X]^\omega$ is called a \emph{nontrivial convergent sequence} if there exists $x \in X$ such that $S \setminus U \in [X]^{<\omega}$ for every $U \in \tau_X$ with $x \in U$. In this case, we say that $x$ is the \emph{limit} of $S$, that $S$ \emph{converges} to $x$, and we write $S \to x$.

With this terminology, we define the hyperspace
$$
\mathcal{S}_c(X) := \{S \in \mathcal{K}(X) : \text{$S$ is a nontrivial convergent sequence in $X$}\}.
$$
We regard this set as a topological space by equipping it with the subspace topology inherited from $\mathcal{K}(X)$. Moreover, for each $x \in X$, define $\mathcal{S}_c(X,x) := \{S \in \mathcal{S}_c(X) : S \to x\}$, and let $L_X := \{x \in X : \mathcal{S}_c(X,x) \neq \emptyset\}$.

Since, by Proposition~\ref{prop_CF_basico}(1), having super caliber $\omega_1$ is a hereditary property, to show that super caliber $\omega_1$ does not transfer from $X$ to the hyperspace $\K(X)$, it suffices to find a space $X$ with super caliber $\omega_1$ such that $\mathcal{S}_c(X)$\footnote{In the context of this work, and in order to consider a hyperspace containing $\F(X)$, it suffices to work with the union $\F(X) \cup \mathcal{S}_c(X)$, viewed as a subspace of $\K(X)$.} does not have this property:

\begin{example}\label{ej_CF_no_sube_Sc(X)} There exists a Hausdorff space $X$ such that \[
\omega_1 \in \SC(X) \setminus \SC(\mathcal{S}_c(X)).
\]

\medskip

In \cite[Example~8.4, p.~440]{mpp2018}, a Hausdorff space $X$ is constructed such that $nw(X)=\omega$ and $\mathcal{S}_c(X)$ has a discrete subspace of cardinality $\mathfrak{c}$. Therefore, Proposition~\ref{prop_CF_basico}(4) and Proposition~\ref{prop_CF_FCT} imply, respectively, that $X$ has super caliber $\omega_1$, whereas $\mathcal{S}_c(X)$ does not have super caliber $\omega_1$.

\end{example}

In light of Example~\ref{ej_CF_no_sube_Sc(X)}, it is not true in general that the super calibers of $X$ transfer to $\mathcal{S}_c(X)$. Nevertheless, a natural question is whether the super calibers of $\mathcal{S}_c(X)$ are, in turn, super calibers for $X$. In the following proposition, we show that the super calibers of $X$ do transfer to the subspace $L_X$:

\begin{proposition}\label{prop_CF_LX} If $X$ is a Hausdorff space, then $\SC(\mathcal{S}_c(X)) \subseteq \SC(L_X)$.

\end{proposition}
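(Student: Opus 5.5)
The plan is to transfer a witnessing configuration in $L_X$ to one in $\mathcal{S}_c(X)$ by replacing each point with a convergent sequence having that point as its limit, arranged to lie inside a single open set. Then the Vietoris basic open set $\vt{W}$ will do all the work.

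Let $\kappa\in\SC(\mathcal{S}_c(X))$. Take $\{x_\alpha : \alpha<\kappa\}\subseteq L_X$ and $\{U_\alpha:\alpha<\kappa\}\subseteq\tau_{L_X}$ with $x_\alpha\in U_\alpha$. For each $\alpha$, choose $W_\alpha\in\tau_X$ with $W_\alpha\cap L_X=U_\alpha$. Since $x_\alpha\in L_X$, choose $S_\alpha\in\mathcal{S}_c(X,x_\alpha)$. Because $S_\alpha\in\K(X)$ is closed and $x_\alpha$ lies in its closure, we get $x_\alpha\in S_\alpha$.

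The first key step is to shrink $S_\alpha$ so that $S_\alpha\subseteq W_\alpha$. Since $S_\alpha\to x_\alpha$ and $x_\alpha\in W_\alpha$, the set $S_\alpha\setminus W_\alpha$ is finite and does not contain $x_\alpha$. Put $S'_\alpha:=S_\alpha\cap W_\alpha$. I will check three things:
\begin{itemize}
\item $S'_\alpha$ is still countably infinite and converges to $x_\alpha$.
\item $S'_\alpha$ is closed, because it is the closed set $S_\alpha$ minus finitely many points, each of which is isolated in $S_\alpha$ (every point other than the limit is isolated, using Hausdorffness). Hence $S'_\alpha$ is closed in $S_\alpha$, and so closed in $X$.
\item $S'_\alpha$ is compact, since it is a convergent sequence together with its limit.
\end{itemize}
Thus $S'_\alpha\in\mathcal{S}_c(X,x_\alpha)$ and $S'_\alpha\subseteq W_\alpha$.

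The second step is to apply the hypothesis. Set $\mathcal{V}_\alpha:=\vt{W_\alpha}\cap\mathcal{S}_c(X)$, which is open in $\mathcal{S}_c(X)$ and contains $S'_\alpha$. Since $\kappa$ is a super caliber for $\mathcal{S}_c(X)$, there is $J\in[\kappa]^\kappa$ with $S'_\alpha\in\mathcal{V}_\beta$ for all $\alpha,\beta\in J$, that is, $S'_\alpha\subseteq W_\beta$. In particular $x_\alpha\in W_\beta\cap L_X=U_\beta$ for all $\alpha,\beta\in J$, which shows that $\kappa\in\SC(L_X)$.

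The only delicate point I expect is the shrinking step. One must verify that removing the finitely many points of $S_\alpha$ outside $W_\alpha$ keeps the set closed, compact, and a nontrivial convergent sequence with the same limit. This is where Hausdorffness is used: it guarantees that the limit is unique and that all other points of $S_\alpha$ are isolated in $S_\alpha$. Without this shrinking, a Vietoris neighbourhood of $S_\alpha$ would not control where the limit point $x_\alpha$ lands, which is why $\vt{W_\alpha}$ alone, rather than a more complicated basic set, is the right choice.
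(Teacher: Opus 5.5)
Your proof is correct and follows the paper's argument. You pick $S_\alpha\in\mathcal{S}_c(X,x_\alpha)$ with $S_\alpha\subseteq W_\alpha$, where $W_\alpha\cap L_X=U_\alpha$, apply the super caliber of $\mathcal{S}_c(X)$ to the open sets $\vt{W_\alpha}\cap\mathcal{S}_c(X)$, and conclude using $x_\alpha\in S_\alpha$. The only difference is that you explicitly verify, via Hausdorffness, that $S_\alpha\cap W_\alpha$ is still in $\mathcal{S}_c(X,x_\alpha)$; the paper asserts the existence of such an $S_\alpha\subseteq V_\alpha$ without proof.
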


\begin{proof} Let $\kappa \in \SC(\mathcal{S}_c(X))$, $\{x_\alpha : \alpha<\kappa\} \subseteq L_X$, and $\{U_\alpha : \alpha<\kappa\} \subseteq \tau_{L_X}$ be such that $x_\alpha \in U_\alpha$ for every $\alpha<\kappa$. For each $\alpha<\kappa$, there exists $V_\alpha \in \tau_X$ with $U_\alpha = V_\alpha \cap L_X$, and, since $x_\alpha \in V_\alpha$, there is also some $S_\alpha \in \mathcal{S}_c(X,x_\alpha)$ such that $S_\alpha \subseteq V_\alpha$; equivalently, $S_\alpha \in \langle V_\alpha \rangle \cap \mathcal{S}_c(X)$ for every $\alpha<\kappa$. Thus, there exists $I \in [\kappa]^{\kappa}$ with $S_\alpha \in \langle V_\beta \rangle \cap \mathcal{S}_c(X)$ for all $\alpha,\beta \in I$, and therefore $x_\alpha \in U_\beta$ whenever $\alpha,\beta \in I$. Hence, $\kappa \in \SC(L_X)$.
\end{proof}

Recall that a topological space $X$ is \emph{sequential} if, for every $A \subseteq X$, the condition $A \notin \CL(X)$ implies that there exist a sequence $S \subseteq A$ and a point $x \in X \setminus A$ such that $S \to x$.

Every sequential space without isolated points has the property that $L_X = X$. Therefore, for this class of spaces, Proposition~\ref{prop_CF_LX} yields the following consequence:

\begin{corollary}\label{cor_CF_LX} If $X$ is a Hausdorff, sequential space without isolated points, then $\SC(\mathcal{S}_c(X)) \subseteq \SC(X)$.

\end{corollary}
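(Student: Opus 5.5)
The corollary reduces to Proposition~\ref{prop_CF_LX}: it suffices to show that $L_X = X$ whenever $X$ is a Hausdorff, sequential space without isolated points. Once that is established, Proposition~\ref{prop_CF_LX} gives $\SC(\mathcal{S}_c(X)) \subseteq \SC(L_X) = \SC(X)$ directly.

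To show $L_X = X$, I fix $x \in X$ and look for a nontrivial convergent sequence converging to $x$.
\begin{enumerate}
\item Put $A := X \setminus \{x\}$. Since $x$ is not isolated, every neighbourhood of $x$ meets $A$, so $x \in \overline{A} \setminus A$ and $A \notin \CL(X)$.
\item Sequentiality then yields a sequence $S \subseteq A$ and a point $y \in X \setminus A$ with $S \to y$. Since $X \setminus A = \{x\}$, we get $y = x$.
\end{enumerate}

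It remains to check that $S$ is a genuine element of $\mathcal{S}_c(X,x)$, and this is the only delicate step. The definition in the paper requires $S \in [X]^\omega$, so $S$ must be infinite; and $\mathcal{S}_c(X)$ lives inside $\K(X)$, so $S$ must also be closed and compact. I handle each requirement as follows.
\begin{itemize}
\item \emph{Infiniteness.} Whether $S$ is infinite depends on how ``sequence'' is read in the definition of sequential. If finite sets were allowed, $S \to x$ would force $x \in \overline{S} = S$ in a $T_1$ space, contradicting $S \subseteq A$. Hence $S$ is infinite, and passing to a countable subset we may take $S \in [X]^\omega$ with $S \to x$.
\item \emph{Closedness and compactness of $S$.} For the element of $\mathcal{S}_c(X)$ I intend the set $S \cup \{x\}$, as is standard for convergent sequences together with their limit. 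In a Hausdorff space this set is compact, hence closed, so it lies in $\K(X)$. Its only accumulation point is $x$, because limits in a Hausdorff space are unique, so $S \cup \{x\}$ converges to $x$.
\end{itemize}
This places $S \cup \{x\}$ in $\mathcal{S}_c(X,x)$, so $x \in L_X$. Since $x$ was arbitrary, $L_X = X$, and the corollary follows.
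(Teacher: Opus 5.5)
Your proposal is correct and follows the paper's route exactly. The paper only observes that $L_X = X$ for every sequential space without isolated points and then applies Proposition~\ref{prop_CF_LX}; your argument supplies the omitted verification that $S \cup \{x\} \in \mathcal{S}_c(X,x)$, though the convergence of $S\cup\{x\}$ to $x$ follows directly from $(S\cup\{x\})\setminus U = S\setminus U$ and does not need uniqueness of limits.
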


Regarding the natural question of whether, in general, the super calibers of the hyperspace $\mathcal{S}_c(X)$ transfer to the space $X$, several considerations are relevant. Suppose that we seek a Hausdorff space $X$ such that
\[
\omega_1 \in \SC(\mathcal{S}_c(X)) \setminus \SC(X).
\]

A first remark is that $\mathcal{S}_c(X)$ cannot have a countable network, since otherwise $X$ would also have a countable network (see \cite[Theorem~8.3, p.~439]{mpp2018}), which would imply that $\omega_1$ is also a super caliber for $X$, by Proposition~\ref{prop_CF_basico}(4).

Second, if such an example exists in \textsf{ZFC}, then $X$ cannot be $T_3$. Indeed, if $X$ is $T_3$, then $\mathcal{S}_c(X)$ is also $T_3$ (since $\K(X)$ is), and consequently, under $\stick_{\text{SUPER}}$, it must have a countable network (see Proposition~\ref{prop_CF_basico}(9)), which is impossible.

Finally, the hyperspace $\mathcal{S}_c(X)$ would provide, in \textsf{ZFC}, an example of a space with super caliber $\omega_1$ and without a countable network. To the best of the author's knowledge, there is only one example with these properties in the literature, constructed in \cite{dowjuh2025} using the technique of elementary submodels. In light of these observations, strengthening Proposition~\ref{prop_CF_LX} and Corollary~\ref{cor_CF_LX} appears to be more difficult than usual. We leave the following question open:

\begin{question} Does there exist a Hausdorff space $X$ such that
\[
\omega_1 \in \SC(\mathcal{S}_c(X)) \setminus \SC(X)?
\]

\end{question}

%%%%%%%%%%%%%%%%%%%%%%%%%%%%%%%%%%%%%%%%%%%%%%%%%%%%%%%%%%%%%%
\section{Topological hyperspaces over metrizable spaces}\label{secc_THM}
%%%%%%%%%%%%%%%%%%%%%%%%%%%%%%%%%%%%%%%%%%%%%%%%%%%%%%%%%%%%%%

In contrast to Example~\ref{ej_CF_no_sube_Sc(X)}, in the realm of metrizable spaces, it is possible to obtain a strengthening of Corollary~\ref{cor_CF_F(X)}; however, some preliminaries are needed for this purpose.

A \emph{$k$-network} on a topological space $X$ is a family of subsets $\mathcal{N}$ of $X$ with the following property: whenever $K \in \mathcal{K}(X)$ and $U \in \tau_X$ satisfy $K \subseteq U$, there exists $\mathcal{F} \in [\mathcal{N}]^{<\omega}$ such that $K \subseteq \bigcup \mathcal{F} \subseteq U$. Since every base for $X$ is a $k$-network and every $k$-network is a net, it follows immediately that, if
$$
knw(X) := \min\{|\mathcal{N}| : \mathcal{N} \text{ is a $k$-network for } X\} + \omega,
$$
then $nw(X) \leq knw(X) \leq w(X)$.

In \cite[Theorem~1.1, p.~361]{ntantu1985}, it is shown that if $X$ is a Tychonoff space, then $nw(\K(X)) = knw(X)$. In particular, if $X$ is a Tychonoff space satisfying $nw(X) = w(X)$ (for example, any metrizable space or any $p$-space), then $nw(\K(X)) = nw(X)$.

With this in mind, and under the convention that $\mathcal{F}_1(X) := \{\{x\} : x \in X\}$, we are in a position to prove the following result:

\begin{proposition}\label{prop_CF_metrizable_hip} The following statements hold for every infinite metrizable space $X$.

\begin{enumerate}
\item If $\mathcal{F}_1(X) \subseteq \HH(X) \subseteq \K(X)$, then $\SC(X) = \SC(\HH(X))$.
\item If $\mathcal{S}_c(X) \neq \emptyset$ and $\mathcal{S}_c(X) \subseteq \HH(X) \subseteq \K(X)$, then $\SC(X) = \SC(\HH(X))$.

\end{enumerate}

\end{proposition}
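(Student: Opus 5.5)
The strategy is a sandwich argument based on net weight. Since $X$ is metrizable and infinite, Proposition~\ref{prop_CF_metrizable} gives $\SC(X)=\{\kappa\in\CN:\cf(\kappa)>nw(X)\}$. Metrizable spaces are Tychonoff and satisfy $nw(X)=w(X)$, so the result of \cite{ntantu1985} recalled above yields $nw(\K(X))=nw(X)$. Because net weight is monotone under subspaces, every $\HH(X)\subseteq\K(X)$ satisfies $nw(\HH(X))\le nw(X)$. Proposition~\ref{prop_CF_basico}(4) then gives
\[
\{\kappa\in\CN:\cf(\kappa)>nw(X)\}\subseteq\SC(\HH(X)),
\]
which is the inclusion $\SC(X)\subseteq\SC(\HH(X))$ in both items. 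It remains to prove $\SC(\HH(X))\subseteq\SC(X)$ in each case.

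\emph{Item (1).} The map $x\mapsto\{x\}$ is an embedding of $X$ onto $\mathcal{F}_1(X)\subseteq\HH(X)$; this is the defining property of a hyperspace with the Vietoris topology, since $X$ is $T_1$. Heredity of super calibers, Proposition~\ref{prop_CF_basico}(1), therefore gives $\SC(\HH(X))\subseteq\SC(X)$.

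\emph{Item (2).} The singletons need not lie in $\HH(X)$, so I would pass to a subspace instead. Every $\kappa\in\SC(\HH(X))$ lies in $\SC(\mathcal{S}_c(X))$ by heredity, and Proposition~\ref{prop_CF_LX} then places it in $\SC(L_X)$. Since $\mathcal{S}_c(X)\ne\emptyset$, the set $L_X$ is nonempty.

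The main obstacle is that $L_X$ may be much smaller than $X$, for instance when $X$ has many isolated points, so $\SC(L_X)$ need not be controlled by $nw(X)$. I would therefore bypass $L_X$ and find directly inside $\mathcal{S}_c(X)$ a metrizable subspace of net weight $nw(X)$, then apply Corollary~\ref{cor_CF_subespacio_nw} to $\HH(X)$. For this we need both
\[
nw(\HH(X))\le nw(X)\quad\text{and}\quad nw(Y)=nw(X)
\]
for some metrizable $Y\subseteq\HH(X)$.

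The construction is as follows. Fix $S\in\mathcal{S}_c(X)$ converging to $p$, and consider $Y=\{S\cup\{x\}:x\in X\setminus S\}$. Each such set is a compact convergent sequence, so $Y\subseteq\mathcal{S}_c(X)\subseteq\HH(X)$, and $Y$ is metrizable because $\K(X)$ is metrizable via the Hausdorff metric. I then check that $x\mapsto S\cup\{x\}$ is a homeomorphism from the open subspace $X\setminus S$ onto $Y$, as follows.
\begin{itemize}
\item Continuity is routine in the Vietoris topology.
\item For the inverse, given $x$ and a neighbourhood $W$ of $x$ disjoint from $S$, the Vietoris set $\langle W,\, X\setminus\{x\}\ldots\rangle$ built from $W$ and a neighbourhood of $S$ missing $\overline{W'}$, where $W'\subseteq W$ is smaller, isolates $S\cup\{x'\}$ with $x'\in W'$.
\end{itemize}

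This gives $nw(Y)=nw(X\setminus S)$. To conclude it suffices to show $nw(X\setminus S)=nw(X)$ when $nw(X)>\omega$; when $nw(X)=\omega$ the sandwich already gives equality. Since $S$ is countable, $X=(X\setminus S)\cup S$ gives $nw(X)\le nw(X\setminus S)+\omega$, which settles this.
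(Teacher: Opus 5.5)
Your proof is correct. Item~(1) and the inclusion $\SC(X)\subseteq\SC(\HH(X))$ follow the paper. For the reverse inclusion in item~(2), however, you take a different route.

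\textbf{Comparison with the paper.} The paper cites \cite[Proposition~8.8]{mpp2018} for $nw(\mathcal{S}_c(X))=nw(X)$, deduces $nw(\HH(X))=nw(X)$, and applies Proposition~\ref{prop_CF_metrizable} to the metrizable space $\HH(X)$. You instead prove the needed lower bound yourself. You embed the open set $X\setminus S$ into $\mathcal{S}_c(X)$ via $x\mapsto S\cup\{x\}$ and then use $nw(X)\le nw(X\setminus S)+\omega$. The paper's version is shorter; yours is self-contained.

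\textbf{Points to tidy.}
\begin{enumerate}
\item Your inverse-continuity bullet is garbled. The clean statement is this: if $W$ is open and $W\cap S=\emptyset$, then $\vt{W,X}\cap Y=\{S\cup\{x\}:x\in W\}$, because $\vt{W,X}$ is exactly the set of closed sets meeting $W$.
\item When $nw(X)=\omega$, the sandwich gives only one inclusion. You must add that $\HH(X)$ is an infinite Hausdorff space, so all its super calibers have uncountable cofinality. Infiniteness already holds because $S\setminus\{s\}\in\mathcal{S}_c(X)$ for every non-limit point $s\in S$.
\item The detour through $L_X$ and Proposition~\ref{prop_CF_LX} can be dropped.
\end{enumerate}

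\textbf{What your embedding gives beyond the citation.} Heredity alone gives $\SC(\mathcal{S}_c(X))\subseteq\SC(X\setminus S)$. Since $S$ is countable, Proposition~\ref{prop_CF_basico}(4) and~(7) (with $\lambda=2$) then yield $\SC(\mathcal{S}_c(X))\subseteq\SC(X)$. This argument needs neither a computation of $nw(\HH(X))$ nor metrizability, and it works for every Hausdorff $X$ with $\mathcal{S}_c(X)\neq\emptyset$. It therefore upgrades Proposition~\ref{prop_CF_LX} from $L_X$ to all of $X$. It also appears to answer the question posed at the end of Section~\ref{secc_TH} negatively whenever $\mathcal{S}_c(X)\neq\emptyset$.
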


\begin{proof} First, observe that, since the hyperspace $\K(X)$ is metrizable whenever $X$ is metrizable, the subspace $\HH(X)$ is also metrizable. Moreover, since $nw(\K(X)) = nw(X)$, Proposition~\ref{prop_CF_metrizable} implies that $\SC(X) = \SC(\K(X))$.

Now, if $\mathcal{F}_1(X) \subseteq \HH(X)$, then, by Proposition~\ref{prop_CF_basico}(1), the equality $\SC(X) = \SC(\K(X))$, and the fact that $X$ is homeomorphic to $\mathcal{F}_1(X)$, it follows that $\SC(X) = \SC(\HH(X))$.

On the other hand, if $\mathcal{S}_c(X) \neq \emptyset$ (i.e., if $X$ is not discrete), then $nw(\mathcal{S}_c(X)) = nw(X)$ (see \cite[Proposition~8.8, p.~441]{mpp2018}). Thus, from $\mathcal{S}_c(X) \subseteq \HH(X) \subseteq \K(X)$, it follows that $nw(\HH(X)) = nw(X)$. Finally, Proposition~\ref{prop_CF_metrizable} guarantees that $\SC(X) = \SC(\HH(X))$.
\end{proof}

In particular, Proposition~\ref{prop_CF_metrizable_hip} implies that $\SC(X)=\SC(\K(X))$ whenever $X$ is an infinite metrizable space. In light of this, a natural question arises: does $\SC(X)=\SC(\CL(X))$ hold under the same hypothesis?

It turns out that, in general, this is no longer the case, and the answer may even be independent of the underlying axiomatic system (see Theorems~\ref{thm_CF_metrizable_1}, \ref{thm_CF_metrizable_2}, and \ref{thm_CF_metrizable_3}). The following results provide intermediate steps in this direction.

\begin{lemma}\label{lema_CF_metrizable_1} If $X$ is a topological space and $A$ is a closed discrete subspace of $X$, then
\[
\SC(\CL(X)) \subseteq \big\{\kappa \in \CN : \cf(\kappa)>2^{|A|}\big\}.
\]

\end{lemma}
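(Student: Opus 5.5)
The plan is to find, inside $\CL(X)$, a discrete subspace of cardinality $2^{|A|}$, and then invoke Proposition~\ref{prop_CF_FCT}. Since $A$ is closed and discrete in $X$, every nonempty subset $B\subseteq A$ is closed in $X$: any subset of a closed discrete set is closed. Hence $\mathcal{D}:=\{B : B\subseteq A,\ B\neq\emptyset\}\subseteq \CL(X)$, and $|\mathcal{D}| = 2^{|A|}$ whenever $A\neq\emptyset$. If $A=\emptyset$ there is nothing to do beyond the trivial remark that $\cf(\kappa)>1$; if $A$ is finite, a finite discrete subspace still only yields $\cf(\kappa)>$ something finite, which holds for every infinite $\kappa$. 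So the interesting case is $A$ infinite.

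The key step is to show that $\mathcal{D}$ is a discrete subspace of $\CL(X)$ with the Vietoris topology. For each $a\in A$, discreteness and closedness of $A$ give an open set $W_a\subseteq X$ with $W_a\cap A=\{a\}$. Indeed, $X\setminus(A\setminus\{a\})$ is open, because $A\setminus\{a\}$ is closed. For $B\in\mathcal{D}$, consider the Vietoris open set
\[
\textstyle \mathcal{W}_B:=\vt{\{X\setminus A\}\cup\{W_b : b\in B\}}\cap\vt{X\setminus(A\setminus B)}.
\]
The second factor requires no point of $A\setminus B$ and is open since $A\setminus B$ is closed. The first factor must be handled carefully: the basic sets use only finitely many open sets, so $B$ infinite is a problem. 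I would instead use sets of the form
\[
\textstyle \vt{X\setminus(A\setminus B)}\cap\bigcap_{b\in F}\vt{X, W_b},
\]
which only forces finitely many points. This shows the family is not obviously discrete, and this is the main obstacle.

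To get around it, I will use a weakly separating family rather than a discrete one, and invoke Proposition~\ref{prop_CF_deb_sep} via Proposition~\ref{prop_CF_FCT}. For $B\in\mathcal{D}$, set $U_B:=\vt{X\setminus(A\setminus B)}$, which is open and contains $B$. If $B\neq C$, then either $B\not\subseteq C$ or $C\not\subseteq B$. In the first case $B\notin U_C$, and in the second $C\notin U_B$. So $\{U_B : B\in\mathcal{D}\}$ witnesses that $\mathcal{D}$ is weakly separated. It is infinite of cardinality $2^{|A|}$, and for a weakly separated space $nw(\mathcal{D})=|\mathcal{D}|$ (\cite{bell1979}).

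Now let $\kappa\in\SC(\CL(X))$. By Proposition~\ref{prop_CF_basico}(1) the subspace $\mathcal{D}$ has super caliber $\kappa$. Proposition~\ref{prop_CF_FCT}, applied with $n=1$, or directly Proposition~\ref{prop_CF_deb_sep}, then gives $\cf(\kappa)>nw(\mathcal{D})=2^{|A|}$, which is the claimed inclusion.
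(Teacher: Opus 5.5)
Your proof is correct and is essentially the paper's argument. The paper also shows that the nonempty subsets of $A$ form a weakly separated family in $\CL(X)$, using the open sets $\vt{U_S}$ with $U_S=\bigcup_{x\in S}U_x$, where $U_x\cap A=\{x\}$; your choice $\vt{X\setminus(A\setminus B)}$ works equally well. The only difference is that the paper concludes directly from the definition together with Proposition~\ref{prop_CF_basico}(2), whereas you cite Proposition~\ref{prop_CF_deb_sep} and the equality $nw=|\cdot|$ for weakly separated spaces.

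The abandoned attempt to prove discreteness can simply be deleted: $\mathcal{D}$ is in fact not discrete, since an infinite $A$ lies in the Vietoris closure of its finite subsets.
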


\begin{proof} For each $x \in A$, let $U_x \in \tau_X$ be such that $U_x \cap A = \{x\}$. Moreover, for every $S \subseteq A$, let $U_S := \bigcup_{x \in S} U_x$. Clearly, $\mathcal{P}(A) \subseteq \CL(X)$, and if $S,T \subseteq A$ are distinct, then $S \not\subseteq U_T$ or $T \not\subseteq U_S$; equivalently, $S \not\in \vt{U_T}$ or $T \not\in \vt{U_S}$.

Therefore, no cardinal less than or equal to $|\mathcal{P}(A)| = 2^{|A|}$ can be a super caliber for $\CL(X)$. Consequently, by Proposition~\ref{prop_CF_basico}(2), if $\kappa \in \SC(\CL(X))$, then $\cf(\kappa) > 2^{|A|}$.
\end{proof}

The first statement of Lemma~\ref{lema_CF_metrizable_2} was proved in the proof of part \textsc{II} of \cite[Theorem~8.5, p.~34]{hodel1984}. The second statement is established by a similar technique, while the third is an immediate consequence of the second.

\begin{lemma}\label{lema_CF_metrizable_2} Let $X$ be a metrizable space, $A$ a discrete subspace of $X$, $\kappa$ an infinite cardinal, and $\lambda$ a cardinal.

\begin{enumerate}
\item If $\cf(|A|)>\omega$, then there exists $B \subseteq A$ such that $|B| = |A|$ and $B$ has no accumulation points.
\item If $|A| > \kappa$, then there exists $B \subseteq A$ such that $|B| > \kappa$ and $B$ has no accumulation points.
\item If $|A| > \kappa\geq \lambda$, then there exists $B \subseteq A$ such that $|B| = \lambda$ and $B$ has no accumulation points.

\end{enumerate}

\end{lemma}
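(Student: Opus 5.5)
We will prove item (2) first and then read off items (1) and (3); the only tool needed is a $\sigma$-discrete base for the metrizable space $X$. Fix a compatible metric $d$ on $X$. For each $n\in\mathbb{N}$, let $A_n$ be the set of points $a\in A$ such that the open ball $B(a,1/n)$ meets $A$ only in $a$. Since $A$ is discrete, every $a\in A$ is isolated in $A$, so $A=\bigcup_{n\in\mathbb{N}}A_n$. Each $A_n$ is $1/n$-separated: distinct $a,b\in A_n$ satisfy $d(a,b)\geq 1/n$.

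Next we split each $A_n$ into pieces with no accumulation points. Use a $\sigma$-discrete base $\mathcal{B}=\bigcup_{m}\mathcal{B}_m$, where each $\mathcal{B}_m$ is a discrete family and every member of $\mathcal{B}_m$ has diameter less than $1/(2n)$; such a base exists by the Bing--Nagata--Smirnov argument applied to covers by balls of small radius. Each $a\in A_n$ lies in some member of some $\mathcal{B}_m$. A set of diameter less than $1/n$ contains at most one point of $A_n$. Hence, letting $A_{n,m}$ be the set of $a\in A_n$ lying in some member of $\mathcal{B}_m$, the points of $A_{n,m}$ are selected by distinct members of the discrete family $\mathcal{B}_m$.

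We claim that $A_{n,m}$ has no accumulation points. Let $x\in X$. By discreteness of $\mathcal{B}_m$, some neighborhood $W$ of $x$ meets at most one member $G$ of $\mathcal{B}_m$. Then $W\cap A_{n,m}\subseteq G\cap A_n$, which has at most one point. Therefore $x$ is not an accumulation point of $A_{n,m}$.

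This yields a countable decomposition $A=\bigcup_{n,m}A_{n,m}$ into sets without accumulation points. If $|A|>\kappa\geq\omega$, then some $A_{n,m}$ has cardinality greater than $\kappa$, because a countable union of sets each of size at most $\kappa$ has size at most $\kappa\cdot\omega=\kappa$. This proves (2).

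Item (1) follows in the same way. If $\cf(|A|)>\omega$, then $|A|$ cannot be the supremum of countably many smaller cardinals. Hence some $A_{n,m}$ has cardinality exactly $|A|$.

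Item (3) follows from (2). Take $B'\subseteq A$ with $|B'|>\kappa\geq\lambda$ and no accumulation points, and choose any $B\subseteq B'$ with $|B|=\lambda$. A subset of a set without accumulation points has none.

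The main obstacle is the separation step: making the pieces genuinely closed discrete, and not merely discrete. The choice of diameters below $1/(2n)$ together with the local finiteness of each $\mathcal{B}_m$ is what handles this. If that bookkeeping fails, we would instead cite directly the argument of \cite[Theorem~8.5]{hodel1984}, which gives the decomposition of $A$ into countably many closed discrete sets.
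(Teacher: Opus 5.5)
Your proof is correct and follows essentially the same route as the argument the paper cites from part II of the proof of \cite[Theorem~8.5]{hodel1984}: $\sigma$-discreteness splits $A$ into countably many sets without accumulation points, a pigeonhole count on cardinalities then gives (1) and (2), and (3) follows from (2). Your two-level indexing by $n$ and $m$ is valid but can be collapsed: choose, for each $a\in A$, a member $B_a$ of one fixed $\sigma$-discrete base $\bigcup_m\mathcal{B}_m$ with $B_a\cap A=\{a\}$; each set $\{a\in A : B_a\in\mathcal{B}_m\}$ is then already closed discrete, so the hedge in your final paragraph is unnecessary.
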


For Theorem~\ref{thm_CF_metrizable_1}, it is useful to keep in mind that, if $X$ is an infinite metrizable space, then $w(X) = nw(X)$ and $X$ contains discrete subspaces of cardinality $w(X)$ (see \cite{hodel1984}). It is also important to recall that $\SC(X)=\{\kappa \in \CN : \cf(\kappa)>w(X)\}$, by Proposition~\ref{prop_CF_metrizable}.

\begin{theorem}\label{thm_CF_metrizable_1} The following statements hold for every infinite metrizable space $X$.

\begin{enumerate}
\item If $\cf(w(X))>\omega$, then \[
w(X)^+ \in \SC(X) \setminus \SC(\CL(X)).
\]
\item If $w(X) = \omega$, then $X$ is compact if and only if $\SC(X) = \SC(\CL(X))$.

\end{enumerate}

\end{theorem}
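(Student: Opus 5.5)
For item (1), put $\kappa:=w(X)$ and use the remark preceding the theorem: $X$ has a discrete subspace $A$ with $|A|=w(X)=\kappa$. Since $\cf(\kappa)>\omega$, Lemma~\ref{lema_CF_metrizable_2}(1) yields $B\subseteq A$ with $|B|=\kappa$ and no accumulation points. Thus $B$ is closed and discrete in $X$. Lemma~\ref{lema_CF_metrizable_1} then gives
\[
\SC(\CL(X))\subseteq\{\mu\in\CN:\cf(\mu)>2^{\kappa}\}.
\]
The cardinal $\kappa^+$ is regular and $\kappa^+\le 2^\kappa$, so $\kappa^+\notin\SC(\CL(X))$. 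On the other hand, $\cf(\kappa^+)=\kappa^+>\kappa=nw(X)$, so Proposition~\ref{prop_CF_metrizable} gives $\kappa^+\in\SC(X)$. This proves (1).

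For item (2), assume $w(X)=\omega$, so that $\SC(X)=\{\mu\in\CN:\cf(\mu)>\omega\}$ by Proposition~\ref{prop_CF_metrizable}. Suppose first that $X$ is compact. Every closed subset of $X$ is then compact, so $\CL(X)=\K(X)$. Proposition~\ref{prop_CF_metrizable_hip}(1), applied with $\HH(X)=\K(X)$, gives $\SC(\CL(X))=\SC(\K(X))=\SC(X)$. Alternatively, $\CL(X)$ is compact metrizable with weight $\omega$, and Proposition~\ref{prop_CF_metrizable} applies directly.

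Conversely, suppose $X$ is not compact. Being metrizable, $X$ is then not countably compact, so it contains a countably infinite closed discrete subset $A$. Lemma~\ref{lema_CF_metrizable_1} gives $\SC(\CL(X))\subseteq\{\mu:\cf(\mu)>\mathfrak c\}$. In particular $\omega_1\notin\SC(\CL(X))$, because $\omega_1\le\mathfrak c$, whereas $\omega_1\in\SC(X)$. Hence the two collections differ.

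Most of the argument is bookkeeping. The step to watch is in (1): we must produce a closed discrete set of full size $w(X)$, not merely a discrete one. This is exactly where the hypothesis $\cf(w(X))>\omega$ enters, through Lemma~\ref{lema_CF_metrizable_2}(1). For (2), the only fact needed beyond the paper is the standard one that a noncompact metrizable space has an infinite closed discrete subset.
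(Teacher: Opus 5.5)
Your proof is correct and follows the paper's argument essentially step by step. In (1) you combine the discrete subspace of size $w(X)$, Lemma~\ref{lema_CF_metrizable_2}(1), Lemma~\ref{lema_CF_metrizable_1} and Proposition~\ref{prop_CF_metrizable}; in (2) you use $\CL(X)=\K(X)$ with Proposition~\ref{prop_CF_metrizable_hip} in the compact case, and a countably infinite closed discrete set with Lemma~\ref{lema_CF_metrizable_1} in the noncompact case. You also spell out the small cardinal-arithmetic facts the paper leaves implicit, namely that $\kappa^+$ is regular with $\kappa^+\le 2^\kappa$, and that $\omega_1\le\mathfrak{c}$.
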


\begin{proof} Regarding (1), the remark preceding this theorem, Lemma~\ref{lema_CF_metrizable_1}, and part (1) of Lemma~\ref{lema_CF_metrizable_2} imply that $w(X)^+ \in \SC(X)\setminus \SC(\CL(X))$.

As for (2), if $X$ is compact, then $\CL(X) = \K(X)$, and Proposition~\ref{prop_CF_metrizable_hip} ensures that $\SC(X) = \SC(\K(X)) = \SC(\CL(X))$. Conversely, if $X$ is not compact, then it is not countably compact, which implies that $\omega_1 \in \SC(X) \setminus \SC(\CL(X))$, by Lemma~\ref{lema_CF_metrizable_1}, since, under this hypothesis, $X$ contains a countably infinite closed discrete subspace.
\end{proof}

According to the statements of Theorem~\ref{thm_CF_metrizable_1}, it remains to analyze what happens when $X$ is an infinite metrizable space, $w(X)>\omega$, and $\cf(w(X))=\omega$. This is the purpose of the remainder of this section.

In Example~\ref{ej_CF_metrizable_X_kappa}, Theorem~\ref{thm_CF_metrizable_2}, and Lemmas~\ref{lema_CF_metrizable_3},~\ref{lema_CF_metrizable_4}, and~\ref{lema_CF_metrizable_5}, $\kappa$ denotes an uncountable cardinal such that $\cf(\kappa)=\omega$. Moreover, $\{\kappa_n : n < \omega\}$ is a strictly increasing sequence of cardinals satisfying $\kappa_0 = 0$, $\kappa_1 = \omega$, and $\kappa = \sup\{\kappa_n : n < \omega\}$.

It is also useful to recall the standard notation for intervals of ordinals: if $\alpha, \beta \in \ON$ satisfy $\alpha < \beta$, then $[\alpha, \beta) := \{\gamma \in \ON : \alpha \leq \gamma < \beta\}$ and $[\alpha, \beta] := [\alpha, \beta) \cup \{\beta\}$.

\begin{example}\label{ej_CF_metrizable_X_kappa} Let $I_n := [\kappa_n,\kappa_{n+1})$ for each $n < \omega$. Also, let $X_\kappa := \kappa + 1$ and, for each $\alpha \in X_\kappa$, let
\[
\ell(\alpha) := \begin{cases} n, & \text{if } \exists \, n<\omega \, (\alpha \in I_n), \\
\omega, & \text{if } \alpha = \kappa.
  \end{cases}
\]
Finally, let $d : X_\kappa \times X_\kappa \to \mathbb{R}$ be the function given by the following rule:
\[
d(\alpha, \beta) :=
\begin{cases} 
0, & \text{if } \alpha = \beta,\\[1mm]
2^{-\min\{\ell(\alpha),\ell(\beta)\}}, & \text{if } \alpha \neq \beta.
\end{cases}
\]

Under these circumstances, a standard argument shows that $d$ is an ultrametric on $X_\kappa$. Moreover, if $B(x,\varepsilon)$ denotes the open ball centered at $x \in X_\kappa$ with radius $\varepsilon>0$ (that is, $B(x,\varepsilon) := \{y \in X_\kappa : d(x,y)<\varepsilon\}$), then the following holds:

\begin{itemize}
\item If $\alpha \in X_\kappa \setminus \{\kappa\}$ and $n<\omega$ satisfies $\alpha \in I_n$, then 
$B(\alpha,2^{-(n+1)}) = \{\alpha\}$.

\item On the other hand, if $0<\varepsilon \leq 1$ and 
$m_\varepsilon := \max\{m<\omega : 2^{-m} \geq \varepsilon\}$, then 
$B(\kappa,\varepsilon) = \{\kappa\} \cup \bigcup_{m>m_\varepsilon} I_m$. In particular, if $n<\omega$, then 
\[B(\kappa,2^{-n}) = \{\kappa\} \cup \bigcup_{m>n} I_m = \{\kappa\} \cup \bigcup_{m>n} [\kappa_m,\kappa_{m+1}) = [\kappa_{n+1},\kappa].\]

\end{itemize}

The ultrametrizable space $X_\kappa$ satisfies $nw(X_\kappa) = \kappa = w(X_\kappa)$. Indeed, the family
\[
\mathcal{B}_\kappa := \{\{\alpha\} : \alpha < \kappa\} \cup \{[\kappa_n,\kappa] : n < \omega\}
\]
forms a basis for $X_\kappa$ of cardinality $\kappa$. Moreover, $X_\kappa$ contains $\kappa$ isolated points, so its net weight cannot be less than $\kappa$.

Additionally, if \[\mathcal{A}(\kappa) := \{A\cup \{\kappa\} : A \in \Pot(\kappa)\} \quad \text{and} \quad \mathcal{B}(\kappa) := \{A \subseteq \kappa : \text{$A$ is bounded in $\kappa$}\},
\] then $\CL(X_\kappa) = \mathcal{A}(\kappa) \cup \mathcal{B}(\kappa)$.

\end{example}

For the following result, it is useful to keep in mind the following basic fact about cardinal functions on hyperspaces: if $X$ is an infinite discrete space, then $nw(\CL(X)) = 2^{|X|}$.

\begin{lemma}\label{lema_peso_red_X_kappa} The space $X_\kappa$ from Example~\ref{ej_CF_metrizable_X_kappa} satisfies
\[
nw(\CL(X_\kappa)) = 2^{<\kappa} = w(\CL(X_\kappa)).
\]

\end{lemma}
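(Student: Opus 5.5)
We prove the chain $2^{<\kappa}\le nw(\CL(X_\kappa))\le w(\CL(X_\kappa))\le 2^{<\kappa}$. The middle inequality is automatic. We use the description $\CL(X_\kappa)=\mathcal{A}(\kappa)\cup\mathcal{B}(\kappa)$ from Example~\ref{ej_CF_metrizable_X_kappa} throughout.

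For the lower bound we fix $n<\omega$ and note that $\kappa_n\subseteq X_\kappa$ is a closed discrete subspace. The ultrametric makes each $I_m$ and each point of $\bigcup_{m<n}I_m$ clopen, and $\bigcup_{m<n} I_m=\kappa_n$ is clopen with the discrete topology. Hence $\CL(\kappa_n)$ embeds as a subspace of $\CL(X_\kappa)$; it is even open, namely $\vt{\kappa_n}$. This gives $nw(\CL(X_\kappa))\ge nw(\CL(\kappa_n))=2^{\kappa_n}$ by the quoted fact about discrete spaces (for $n\ge1$). Every $\mu<\kappa$ lies below some $\kappa_n$, so $nw(\CL(X_\kappa))\ge\sup_n 2^{\kappa_n}=2^{<\kappa}$.

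For the upper bound we exhibit a base of size $2^{<\kappa}$. We use the basis $\mathcal{B}_\kappa$ of $X_\kappa$, and note that Vietoris basic sets built from a base of $X$ form a base of $\CL(X)$. However, the family $[\mathcal{B}_\kappa]^{<\omega}$ has only size $\kappa$ and is not enough: an infinite set $A\subseteq\kappa_n$ needs infinitely many singletons to be pinned down. We therefore use instead sets of the form
\[
\textstyle\mathcal{W}(n,S,\mathcal{G}) := \{C\in\CL(X_\kappa): C\cap\kappa_n = S\}\cap\vt{\mathcal{G}},
\]
where $n<\omega$, $S\subseteq\kappa_n$ and $\mathcal{G}\in[\mathcal{B}_\kappa]^{<\omega}$.

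The first claim is that $\{C: C\cap\kappa_n=S\}=\vt{\{\{x\}:x\in S\}\cup\{[\kappa_n,\kappa]\}}$ when $C$ meets the tail, and equals $\vt{\{x\}:x\in S}$ restricted appropriately otherwise. Both cases are Vietoris open, because infinitely many singletons become harmless: the condition $C\cap\kappa_n=S$ is equivalent to $C\subseteq S\cup[\kappa_n,\kappa]$ together with $S\subseteq C$. The first of these is the upper-Vietoris set of the open set $S\cup[\kappa_n,\kappa]$. The second holds because each $x\in S$ is isolated, so it is a finite intersection of lower conditions for each point; for infinite $S$ we use instead that $\{C: S\subseteq C\}$ is open, since its complement $\bigcup_{x\in S}\{C: x\notin C\}$ is closed. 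The main obstacle is to verify this openness carefully. Indeed $\{C:x\in C\}=\vt{X_\kappa,\{x\}}$ is clopen, but an arbitrary intersection of such sets need not be open.

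We therefore reformulate via the clopen decomposition $X_\kappa=\kappa_n\sqcup[\kappa_n,\kappa]$. This yields a homeomorphism
\[
\CL(X_\kappa)\cong \CL(\kappa_n)\sqcup\CL([\kappa_n,\kappa])\sqcup\big(\CL(\kappa_n)\times\CL([\kappa_n,\kappa])\big),
\]
given by $C\mapsto(C\cap\kappa_n, C\cap[\kappa_n,\kappa])$. It is cleaner to abandon the exact-trace sets and simply estimate weights with this decomposition. The weight of $\CL(D)$ for $D$ discrete of size $\kappa_n$ is at most $2^{\kappa_n}$. The remaining work is to show that a neighbourhood base at each $C$ is obtained by combining, over $n$, a basic neighbourhood of $C\cap\kappa_n$ in $\CL(\kappa_n)$ (of which there are at most $2^{\kappa_n}$) with finitely many tail conditions from $\{[\kappa_m,\kappa]\}$. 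Every basic Vietoris set $\vt{\mathcal{G}}$ with $\mathcal{G}\subseteq\mathcal{B}_\kappa$ finite involves only points below some $\kappa_n$ plus tails, so it decomposes as above with $n$ large. The union over $n$ then gives a base of size $\sum_n 2^{\kappa_n}\cdot\kappa=2^{<\kappa}$, completing $w(\CL(X_\kappa))\le2^{<\kappa}$.
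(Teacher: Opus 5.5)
Your lower bound is correct and is the paper's argument: $\vt{\kappa_n}$ is a copy of $\CL(\kappa_n)$ with $\kappa_n$ discrete, so $nw(\CL(X_\kappa))\ge\sup_n 2^{\kappa_n}=2^{<\kappa}$.

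\textbf{The upper bound is not established.} Your final step (``every basic Vietoris set $\vt{\mathcal{G}}$ with $\mathcal{G}\subseteq\mathcal{B}_\kappa$ finite \dots\ decomposes as above'') only helps if finite Vietoris sets built from the base $\mathcal{B}_\kappa$ form a base of $\CL(X_\kappa)$. You also asserted this at the start, and it is false. It holds in $\K(X)$ by compactness, but in $\CL(X)$ the upper slot must contain all of $C$. Here it would give $w(\CL(X_\kappa))\le|[\mathcal{B}_\kappa]^{<\omega}|=\kappa$, contradicting your own lower bound whenever $2^{<\kappa}>\kappa$ (e.g.\ when $\mathfrak{c}>\kappa$). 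You even note that $[\mathcal{B}_\kappa]^{<\omega}$ ``is not enough'', so the proposal is inconsistent exactly at the decisive point.

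The intermediate claim that $\{C: S\subseteq C\}$ is open for infinite $S$ is also false. An infinite union of closed sets need not be closed, and every Vietoris neighbourhood of $S$ contains, as an element, a finite subset of $S$. You sensed this and dropped it, but nothing valid replaced it. The clopen splitting $X_\kappa=\kappa_n\sqcup[\kappa_n,\kappa]$ at a fixed $n$ does not reduce anything on its own, since $\CL([\kappa_n,\kappa])$ is of the same kind as $\CL(X_\kappa)$. All the content sits in the neighbourhood-base claim you call ``the remaining work'' and never prove.

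\textbf{The missing step, which is how the paper proceeds.} Every subset of $\kappa$ is open, so
$\mathcal{B}:=\bigcup_{n<\omega}\big(\Pot(\kappa_n)\cup\{S\cup[\kappa_n,\kappa]:S\subseteq\kappa_n\}\big)$
is a base of $X_\kappa$ with $|\mathcal{B}|\le 2^{<\kappa}$. Let $C\in\vt{U_1,\dots,U_m}$, where the $U_i$ are arbitrary open sets, possibly unbounded subsets of $\kappa$. Pick $\alpha_i\in C\cap U_i$ and $B_i\in\mathcal{B}$ with $\alpha_i\in B_i\subseteq U_i$. For the upper slot:
\begin{enumerate}
\item if $\kappa\notin C$, then $C$ is closed and misses a tail, hence $C\in\Pot(\kappa_n)$ for some $n$; set $B_0:=C$;
\item if $\kappa\in C$, choose $n$ with $[\kappa_n,\kappa]\subseteq\bigcup_i U_i$ and set $B_0:=(C\cap\kappa_n)\cup[\kappa_n,\kappa]$.
\end{enumerate}
Then $C\in\vt{B_0,\dots,B_m}\subseteq\vt{U_1,\dots,U_m}$, so $\{\vt{\mathcal{G}}:\mathcal{G}\in[\mathcal{B}]^{<\omega}\}$ is a base of size at most $2^{<\kappa}$.

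The point is that you never need $C'\cap\kappa_n=C\cap\kappa_n$. The upper inclusion $C'\subseteq B_0$ plus finitely many hitting conditions suffices. Your ``basic neighbourhood of $C\cap\kappa_n$ in $\CL(\kappa_n)$ plus a tail condition'' is exactly such a $\vt{B_0,\dots,B_m}$, but this verification is what your proposal lacks.
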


\begin{proof} First, for each $n<\omega$, the subspace $Y_n := \{A \in \CL(X_\kappa) : A \subseteq \kappa_n\}$ is homeomorphic to the hyperspace $\CL(\kappa_n)$, where $\kappa_n$ is equipped with the discrete topology. Moreover, since $\sup\{\kappa_n : n < \omega\} = \kappa$, it follows that
\begin{align*}
2^{<\kappa} = \sup\{2^{\kappa_n} : n<\omega\} &= \sup\{nw(\CL(\kappa_n)) : n <\omega\} \\
&= \sup\{nw(Y_n) : n < \omega\} \leq nw(\CL(X_\kappa)).
\end{align*}

On the other hand, let
\[
\textstyle
\mathcal{B} := \bigcup_{n<\omega} \big(\Pot(\kappa_n) \cup \{A \cup [\kappa_n,\kappa] : A \in \Pot(\kappa_n)\}\big).
\]
Then $\mathcal{B}$ is a base for $X_\kappa$, which, in particular, implies that the collection
\[
\textstyle
\mathcal{V} := \{\langle B_0, \ldots, B_m\rangle : m<\omega \ \wedge \ \{B_i : i \leq m\} \subseteq \mathcal{B}\}
\]
consists of open subsets of $\CL(X_\kappa)$. Our goal is to verify that $\mathcal{V}$ is a base for $\CL(X_\kappa)$.

Let $O$ be an open subset of $\CL(X_\kappa)$, let $A$ be an element of $O$, let $m\in \mathbb{N}$, and let $U_1,\ldots,U_m \in \tau_{X_\kappa}$ be such that $A \in \langle U_1,\ldots,U_m\rangle \subseteq O$. For each $1 \leq i \leq m$, let $\alpha_i \in A \cap U_i$ and let $B_i \in \mathcal{B}$ be such that $\alpha_i \in B_i \subseteq U_i$. Moreover, if $\kappa \not \in A$, let $B_0 := A$; otherwise, the condition $\kappa \in A$ implies that there is some $n<\omega$ such that $[\kappa_n,\kappa] \subseteq \bigcup_{i=1}^{m} U_i$, and we let $B_0 := (A\cap \kappa_n) \cup [\kappa_n,\kappa]$. Clearly, $\langle B_0,\ldots,B_m \rangle \in \mathcal{V}$, $A \in \langle B_0,\ldots,B_m \rangle$, and $\langle B_0,\ldots,B_m \rangle \subseteq \langle U_1,\ldots,U_m\rangle \subseteq O$.

In summary, since $\mathcal{V}$ is a base for $\CL(X_\kappa)$, it follows that
\[
nw(\CL(X_\kappa)) \leq w(\CL(X_\kappa)) \leq |\mathcal{V}| \leq |[\mathcal{B}]^{<\omega}| = |\mathcal{B}| \leq \omega \cdot \sup \{2^{\kappa_n} : n <\omega\} = 2^{<\kappa}.
\]
and, therefore,
\[
nw(\CL(X_\kappa)) = 2^{<\kappa} = w(\CL(X_\kappa)).
\]
\end{proof}

Lemma~\ref{lema_peso_red_X_kappa} and the fact that, under the Generalized Continuum Hypothesis, \textsf{GCH}, we have $2^{<\kappa} = \kappa$, are all we need to prove Theorem~\ref{thm_CF_metrizable_2}:

\begin{theorem}\label{thm_CF_metrizable_2} If \textsf{GCH} holds, then the space $X_\kappa$ from Example~\ref{ej_CF_metrizable_X_kappa} satisfies
\[
\SC(X_\kappa) = \SC(\CL(X_\kappa)).
\]

\end{theorem}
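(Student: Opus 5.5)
Under \textsf{GCH}, Lemma~\ref{lema_peso_red_X_kappa} gives $nw(\CL(X_\kappa)) = 2^{<\kappa} = \kappa = nw(X_\kappa)$. The plan is to squeeze $\SC(\CL(X_\kappa))$ between two sets that both coincide with $\SC(X_\kappa)$.

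First, by Proposition~\ref{prop_CF_metrizable} applied to the infinite metrizable space $X_\kappa$, whose net weight is $\kappa$ (Example~\ref{ej_CF_metrizable_X_kappa}), we have
\[
\SC(X_\kappa) = \{\mu \in \CN : \cf(\mu) > \kappa\}.
\]

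For the inclusion $\SC(\CL(X_\kappa)) \subseteq \SC(X_\kappa)$, observe that $X_\kappa$ is $T_1$, so it embeds in $\CL(X_\kappa)$ via $x \mapsto \{x\}$. Since super calibers are hereditary (Proposition~\ref{prop_CF_basico}(1)), every super caliber of $\CL(X_\kappa)$ is a super caliber of $X_\kappa$.

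For the reverse inclusion, let $\mu$ satisfy $\cf(\mu) > \kappa$. Under \textsf{GCH}, $2^{\kappa_n} = \kappa_n^+ \le \kappa$ for each $n$, so $2^{<\kappa} = \sup_n 2^{\kappa_n} = \kappa$. By Lemma~\ref{lema_peso_red_X_kappa}, $nw(\CL(X_\kappa)) = \kappa < \cf(\mu)$, and Proposition~\ref{prop_CF_basico}(4) then shows that $\mu \in \SC(\CL(X_\kappa))$.

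The only step requiring any care is the cardinal arithmetic $2^{<\kappa} = \kappa$ when $\kappa$ is singular. Here it is immediate because each $\kappa_n < \kappa$ gives $2^{\kappa_n} = \kappa_n^+ \le \kappa$; this is the fact the paper invokes just before the statement. Everything else is a direct combination of earlier results.
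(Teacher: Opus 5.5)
Your proposal is correct and is essentially the paper's own argument. You compute $\SC(X_\kappa)$ via Proposition~\ref{prop_CF_metrizable}, get $\SC(\CL(X_\kappa))\subseteq\SC(X_\kappa)$ from heredity, and get the reverse inclusion from $nw(\CL(X_\kappa))=2^{<\kappa}=\kappa$ under \textsf{GCH} together with Proposition~\ref{prop_CF_basico}(4); you also spell out the cardinal arithmetic $2^{<\kappa}=\kappa$, which the paper only cites.
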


\begin{proof} First, since $nw(X_\kappa) = \kappa$, Proposition~\ref{prop_CF_metrizable} implies that $\SC(X_\kappa) = \{\lambda \in \CN : \cf(\lambda)>\kappa\}$. For the reverse inclusion, since $X_\kappa$ is homeomorphic to a subspace of $\CL(X_\kappa)$ and $nw(\CL(X_\kappa)) = \kappa$, items (1) and (4) of Proposition~\ref{prop_CF_basico} imply that $\{\lambda \in \CN : \cf(\lambda)>\kappa\} \subseteq \SC(\CL(X_\kappa)) \subseteq \SC(X_\kappa)$. Hence, $\SC(X_\kappa) = \SC(\CL(X_\kappa))$.
\end{proof}

It is an appropriate time to pause and discuss a little bit of cardinal arithmetic. Under our hypotheses, it is straightforward to verify that 
$|\mathcal{B}(\kappa)| = \sup\{2^{\kappa_n} : n<\omega\}$. Therefore, since
\[
\mathfrak{c} = |\mathcal{P}(\omega)| = |\mathcal{P}([\kappa_0,\kappa_1))| \leq |\mathcal{B}(\kappa)|,
\]
the hypothesis $\mathfrak{c} > \kappa$, which is consistent with \textsf{ZFC}, implies that $|\mathcal{B}(\kappa)| \geq \kappa^+$.

The following lemmas are intended to pave the way for the proof of Theorem~\ref{thm_CF_metrizable_3}, in which we obtain a consistency result that depends on the size of the continuum $\mathfrak{c}$.

\begin{lemma}\label{lema_CF_metrizable_3} The inequality $\mathfrak{c} > \kappa$ implies that the space $X_\kappa$ from Example~\ref{ej_CF_metrizable_X_kappa} satisfies $\kappa^+ \not\in \SC(\CL(X_\kappa))$.

\end{lemma}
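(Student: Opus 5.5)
The plan is to reduce the statement to Lemma~\ref{lema_CF_metrizable_1} by exhibiting a suitable closed discrete subspace of $X_\kappa$. The natural candidate is the first block $I_0 = [\kappa_0,\kappa_1) = [0,\omega)$, which has cardinality $\omega$. Lemma~\ref{lema_CF_metrizable_1} then yields
\[
\SC(\CL(X_\kappa)) \subseteq \{\mu \in \CN : \cf(\mu) > 2^{\omega}\} = \{\mu \in \CN : \cf(\mu) > \mathfrak{c}\},
\]
and the hypothesis $\mathfrak{c} > \kappa$ finishes the argument.

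First, I would check that $I_0$ is closed and discrete in $X_\kappa$, using the description of balls in Example~\ref{ej_CF_metrizable_X_kappa}.

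\emph{Discreteness.} For each $\alpha \in I_0$ we have $B(\alpha, 2^{-1}) = \{\alpha\}$, so every point of $I_0$ is isolated in $X_\kappa$.

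\emph{Closedness.} Every point of $X_\kappa \setminus I_0$ has a neighbourhood missing $I_0$. A point $\alpha < \kappa$ with $\alpha \notin I_0$ is isolated, so $\{\alpha\}$ works. For the point $\kappa$, the ball $B(\kappa, 2^{-0}) = [\kappa_1, \kappa]$ is disjoint from $I_0$.

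Hence $A := I_0$ is a closed discrete subspace of $X_\kappa$ with $|A| = \omega$, and Lemma~\ref{lema_CF_metrizable_1} gives the displayed inclusion above.

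Finally, $\kappa^+$ is a successor cardinal, hence regular, so $\cf(\kappa^+) = \kappa^+$. Since $\mathfrak{c} > \kappa$, we get $\mathfrak{c} \geq \kappa^+$, and therefore $\cf(\kappa^+) = \kappa^+ \leq \mathfrak{c}$. Thus $\kappa^+$ does not belong to the right-hand side of the inclusion, and so $\kappa^+ \notin \SC(\CL(X_\kappa))$.

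There is no real obstacle here; the only point needing care is the closedness of $I_0$ at the point $\kappa$. Alternatively, one could avoid Lemma~\ref{lema_CF_metrizable_1} and argue directly, in the spirit of the cardinal-arithmetic remark preceding the lemma. The family $\mathcal{P}(I_0) \setminus \{\emptyset\} \subseteq \mathcal{B}(\kappa)$ consists of clopen-type closed sets $S$, each with the open neighbourhood $\vt{S}$. For distinct $S, T$, either $S \notin \vt{T}$ or $T \notin \vt{S}$. Restricting to $\kappa^+ \leq \mathfrak{c}$ many of these sets then witnesses the failure of super caliber $\kappa^+$.
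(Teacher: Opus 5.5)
Your proof is correct; it differs from the paper's mainly in how the argument is packaged.

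The paper argues directly. It takes $\kappa^+$ distinct bounded subsets $A_\alpha\subseteq\kappa$, which exist because $|\mathcal{B}(\kappa)|\geq|\mathcal{P}(I_0)|=\mathfrak{c}>\kappa$ by the remark preceding the lemma. Bounded subsets of $\kappa$ are clopen in $X_\kappa$, so $\vt{A_\alpha}$ is an open neighbourhood of $A_\alpha$. Then $A_\alpha\in\vt{A_\beta}$ together with $A_\beta\in\vt{A_\alpha}$ forces $A_\alpha=A_\beta$. This is exactly the direct alternative you sketch at the end. Your version also discards $\emptyset$ explicitly, which the paper's choice from $\mathcal{B}(\kappa)$ leaves implicit.

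Your main route instead feeds the closed discrete set $I_0$ into Lemma~\ref{lema_CF_metrizable_1}, whose proof is the same inclusion-antichain trick with auxiliary open sets $U_S$. The only extra cost is checking that $I_0$ is closed at the point $\kappa$, which you do correctly. In return you get a stronger and more portable statement: $\SC(\CL(X_\kappa))\subseteq\{\mu\in\CN:\cf(\mu)>\mathfrak{c}\}$. This uses nothing about $X_\kappa$ except the existence of a countably infinite closed discrete subset.

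Every non-compact metrizable space has such a subset, and every metrizable $X$ with $w(X)>\omega$ is non-compact. So the same two steps, together with Proposition~\ref{prop_CF_metrizable}, give Theorem~\ref{thm_CF_metrizable_3} directly, even without the assumption $\cf(w(X))=\omega$. This bypasses embedding a closed copy of $X_\kappa$ through Lemmas~\ref{lema_CF_metrizable_4} and~\ref{lema_CF_metrizable_5}.
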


\begin{proof} Let $\{A_\alpha : \alpha<\kappa^+\}$ be a subset of $\mathcal{B}(\kappa)$ enumerated without repetitions. Observe that, for every $\alpha<\kappa^+$, we have $A_\alpha \in \langle A_\alpha \rangle$. Moreover, if $\alpha<\beta<\kappa^+$, it cannot be the case that both $A_\alpha \in \langle A_\beta \rangle$ and $A_\beta \in \langle A_\alpha \rangle$, since this would imply that $A_\alpha = A_\beta$. Consequently, $\kappa^+ \not\in \SC(\CL(X_\kappa))$.
\end{proof}

\begin{lemma}\label{lema_CF_metrizable_4} Let $Z$ be a topological space, let $C$ be a discrete subspace of $Z$, and let $p \in Z \setminus C$ be such that $Z = C \cup \{p\}$. If there is an increasing sequence $\{C_n : n \in \mathbb{N}\}$ such that $C = \bigcup_{n \in \mathbb{N}} C_n$, $|C_n| = \kappa_n$ for every $n \in \mathbb{N}$, and $\{Z \setminus C_n : n \in \mathbb{N}\}$ is a local base for $p$ in $Z$, then $Z$ is homeomorphic to the space $X_\kappa$ from Example~\ref{ej_CF_metrizable_X_kappa}.

\end{lemma}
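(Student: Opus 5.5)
We will build an explicit bijection $h : Z \to X_\kappa$ and check that it is a homeomorphism by comparing local bases at every point. Set $C_0 := \emptyset$. The sets $D_n := C_{n+1} \setminus C_n$ for $n<\omega$ partition $C$, and we have $|D_0| = |C_1| = \kappa_1 = \omega = |I_0|$. For $n \geq 1$, since $|C_n| = \kappa_n < \kappa_{n+1} = |C_{n+1}|$ and $\kappa_{n+1}$ is infinite, we get $|D_n| = \kappa_{n+1} = |I_n|$, where $I_n = [\kappa_n,\kappa_{n+1})$ has cardinality $\kappa_{n+1}$. Here we assume the sequence $\{C_n\}$ is increasing under inclusion, which is the intended reading; the cardinalities then force strict inclusions. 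So we may choose bijections $h_n : D_n \to I_n$ and define $h := \bigcup_{n<\omega} h_n \cup \{(p,\kappa)\}$. This is a bijection $Z \to X_\kappa$, because the $I_n$ partition $\kappa$ and $C$ is the union of the $D_n$.

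Next we check local bases.

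\emph{Points of $C$.} Every $c \in C$ is isolated in $Z$. Indeed, $C$ is discrete, so some open $W \ni c$ meets $C$ only in $c$. Choose $n$ with $c \in C_n$; then $Z \setminus C_n$ is a neighbourhood of $p$ not containing $c$, and it contains an open set $V \ni p$. Then $W \setminus \overline{\{p\}}$ might not be open without more separation, so instead we argue directly: $Z \setminus C_n$ contains an open neighbourhood $V$ of $p$ with $c \notin V$, and $W \cap V \subseteq \{c\} \cap V = \emptyset$. Hence $p \notin W$, so $W = \{c\}$ and $c$ is isolated. On the other side, every point of $\kappa$ is isolated in $X_\kappa$ by the first bullet in Example~\ref{ej_CF_metrizable_X_kappa}. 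So $h$ and $h^{-1}$ are continuous at these points.

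\emph{The point $p$.} For $n \geq 1$ we compute
\[
h[Z \setminus C_n] = \{\kappa\} \cup \bigcup_{m \geq n} h[D_m] = \{\kappa\} \cup \bigcup_{m\geq n} I_m = [\kappa_n,\kappa].
\]
By Example~\ref{ej_CF_metrizable_X_kappa}, $[\kappa_n,\kappa] = B(\kappa,2^{-(n-1)})$, and these balls form a local base at $\kappa$. The hypothesis says $\{Z\setminus C_n : n\in\mathbb{N}\}$ is a local base at $p$. Hence $h$ maps a local base at $p$ onto a local base at $h(p)$.

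A bijection that carries local bases to local bases at every point is a homeomorphism. The only delicate step is the isolation of the points of $C$ in $Z$, where the tail neighbourhoods $Z\setminus C_n$ are used to keep $p$ out of the isolating neighbourhood; the rest is bookkeeping of cardinalities.
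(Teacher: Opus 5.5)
Your construction is the paper's: a blockwise bijection sending $C_{n+1}\setminus C_n$ onto $I_n$ and $p$ to $\kappa$, under which the tails $Z\setminus C_n$ correspond to $[\kappa_n,\kappa]$. Your cardinality bookkeeping and your computation at $p$ are correct.

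The gap is in the step you single out as delicate: showing that points of $C$ are isolated in $Z$. That argument is circular. From $W\cap C=\{c\}$ you only get $W\subseteq\{c,p\}$, and since $c\notin V$ this yields $W\cap V\subseteq\{p\}$. The inclusion $W\cap V\subseteq\{c\}\cap V$ that you assert is equivalent to $p\notin W$, which is exactly what you are trying to prove. The fact you invoke cannot by itself keep $p$ out of $W$: it says $Z\setminus C_n$ is a neighbourhood of $p$ omitting $c$. That fact is compatible with $p\in W$, for instance if both $\{p\}$ and $\{c,p\}$ were open.

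What you need is the other half of the local-base hypothesis: every open set containing $p$ contains some tail. So if $p\in W$, then $Z\setminus C_m\subseteq W$ for some $m\in\mathbb{N}$. Hence $W\cap C\supseteq C\setminus C_m\supseteq C_{m+1}\setminus C_m$, a set of cardinality $\kappa_{m+1}\geq\omega$, contradicting $W\cap C=\{c\}$. Therefore $p\notin W$, $W=\{c\}$, and $c$ is isolated. With this replacement your proof is complete.

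The paper never justifies this point. It simply asserts that the points of $C$ are the isolated points of $Z$ and that $\{\{x\}:x\in C\}\cup\{Z\setminus C_n:n<\omega\}$ is a base for $Z$. So the corrected version of your write-up supplies a detail the paper leaves implicit.
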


\begin{proof} Let $C_0 := \emptyset$ and let $f_{n+1} : [\kappa_n,\kappa_{n+1}) \to C_{n+1} \setminus C_n$ be a bijection for every $n < \omega$. Clearly,
\[\textstyle
f := \{(\kappa,p)\} \cup \bigcup\{f_n : n \in \mathbb{N}\}
\]
is a bijection between $X_\kappa$ and $Z$, with the property that the restriction $\restr{f}{\kappa} : \kappa \to C$ is, in turn, a bijection between the isolated points of $X_\kappa$ and those of $Z$. Moreover, for every $n < \omega$, we have $[\kappa_n,\kappa] = f^{-1}[Z \setminus C_n]$. Therefore, since
\[
\mathcal{B}_\kappa := \{\{\alpha\} : \alpha < \kappa\} \cup \{[\kappa_n,\kappa] : n < \omega\}
\ \text{and} \
\mathcal{B} := \{\{x\} : x \in C\} \cup \{Z \setminus C_n : n < \omega\}
\]
are bases for $X_\kappa$ and $Z$, respectively, it follows that $f : X_\kappa \to Z$ is a homeomorphism.
\end{proof}

For Lemma~\ref{lema_CF_metrizable_5}, it is useful to recall an important concept: if $X$ is a topological space, $A$ is a subset of $X$, and $p \in X$, then $p$ is a \emph{complete accumulation point} of $A$ in $X$ if $|A \cap U| = |A|$ for every $U \in \tau_X$ such that $p \in U$.

\begin{lemma}\label{lema_CF_metrizable_5} Let $Y$ be a first countable space, let $A$ be a discrete subspace of $Y$, and let $p \in Y \setminus A$ be such that $Y = A \cup \{p\}$ and $|A| = \kappa$. If $Y$ has no subspaces of cardinality $\kappa$ without accumulation points, then $Y$ contains a closed subspace homeomorphic to the space $X_\kappa$ from Example~\ref{ej_CF_metrizable_X_kappa}.

\end{lemma}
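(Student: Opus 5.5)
The plan is to build directly a set $C\subseteq A$ and an increasing sequence $\{C_k : k\in\mathbb{N}\}$ that satisfy the hypotheses of Lemma~\ref{lema_CF_metrizable_4}, and then to check that $Z:=C\cup\{p\}$ is closed in $Y$. Throughout I use that $Y$ is $T_1$, so that $\bigcap_n V_n=\{p\}$ for a local base $\{V_n\}$ at $p$. This holds in the intended metrizable applications, and I would add it explicitly if the statement needs it.

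\emph{Step 1: accumulation points.} Since $A$ is discrete, every $a\in A$ has a neighbourhood meeting $A$ only in $a$. Hence no point of $A$ is an accumulation point of any $B\subseteq A$, and the only possible accumulation point of such a $B$ is $p$. The hypothesis therefore says: $p$ is an accumulation point of every $B\in[A]^\kappa$.

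\emph{Step 2: a filtration of $A$.} Fix a decreasing local base $\{V_n:n<\omega\}$ at $p$ and put $D_n:=A\setminus V_n$, an increasing sequence of sets.
\begin{itemize}
\item By $T_1$, $A=\bigcup_n D_n$.
\item If some $|D_n|=\kappa$, then $V_n$ is a neighbourhood of $p$ disjoint from $D_n$, so $p$ is not an accumulation point of $D_n$. This contradicts Step~1, so $|D_n|<\kappa$ for every $n$.
\item Since $|A|=\kappa$ and $\cf(\kappa)=\omega$, it follows that $\sup_n|D_n|=\kappa$.
\end{itemize}

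\emph{Step 3: the recursive choice.} This is the main step. Set $m_0:=0$ and $C_0:=\emptyset$. Recursively choose $m_k>m_{k-1}$ with
\[
|D_{m_k}|>\max\{\kappa_k,|D_{m_{k-1}}|\},
\]
which is possible because $\sup_n|D_n|=\kappa$ while both quantities on the right are $<\kappa$. Then $|D_{m_k}\setminus D_{m_{k-1}}|=|D_{m_k}|\geq\kappa_k$. Choose
\[
C_k:=C_{k-1}\cup E_k, \qquad E_k\subseteq D_{m_k}\setminus D_{m_{k-1}},
\]
with $E_k$ of size $\kappa_k$ (take $E_1$ countably infinite). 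Then $|C_k|=\kappa_k$, the sequence $(C_k)$ is increasing, and $C:=\bigcup_k C_k$.

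The point of taking the new points outside $D_{m_{k-1}}$ is the exact identity
\[
C\cap D_{m_k}=C_k ,
\]
since all points added at later stages lie outside $D_{m_k}$. Consequently
\[
Z\cap V_{m_k}=\{p\}\cup(C\setminus D_{m_k})=Z\setminus C_k .
\]
Because $(m_k)$ is cofinal in $\omega$ and $(V_n)$ is decreasing, $\{Z\setminus C_k:k\in\mathbb{N}\}$ is a local base for $p$ in $Z$. Also $C$ is discrete, being a subspace of $A$.

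\emph{Step 4: conclusion.} Lemma~\ref{lema_CF_metrizable_4} now gives that $Z$ is homeomorphic to $X_\kappa$. It remains to see that $Z$ is closed in $Y$. Every point of $Y\setminus Z$ lies in $A\setminus C$. Each such point has a neighbourhood meeting $A$ only in itself, and by $T_1$ we may shrink it to avoid $p$; this neighbourhood misses $Z$. Hence $Z$ is closed in $Y$.

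The only delicate point is the choice of the blocks $E_k$ in Step~3 so that the traces $Z\cap V_{m_k}$ are exactly the sets $Z\setminus C_k$. The rest is bookkeeping.
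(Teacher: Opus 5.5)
Your argument is correct, given the $T_1$ assumption you flag, but it takes a different route from the paper.

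\textbf{The paper's route.} It first shows that $p$ is a complete accumulation point of $A$. It then recursively chooses $A_n\subseteq (A\cap B_n)\setminus\bigcup_{0<m<n}A_m$ with $|A_n|=\kappa_n$ and no accumulation points, by invoking Lemma~\ref{lema_CF_metrizable_2}(3). Finally it checks only the inclusion $Z\setminus C_n\subseteq Z\cap B_{n+1}$; openness of $Z\setminus C_n$ comes implicitly from the $A_m$ being closed.

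\textbf{Your route.} Your filtration $D_n=A\setminus V_n$ expresses the same complete-accumulation fact as $|D_n|<\kappa$. Placing the blocks $E_k$ in the annuli $D_{m_k}\setminus D_{m_{k-1}}$ makes each block automatically closed discrete, because it misses the neighbourhood $V_{m_k}$ of $p$. So Lemma~\ref{lema_CF_metrizable_2} is never needed. The exact identity $Z\cap V_{m_k}=Z\setminus C_k$ then gives openness and cofinality of the local base at once. You also verify that $Z$ is closed in $Y$, which the paper only asserts.

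\textbf{What your route buys.} Lemma~\ref{lema_CF_metrizable_2} concerns metrizable spaces, so the paper's proof is strictly justified only for metrizable $Y$. That suffices for Theorem~\ref{thm_CF_metrizable_3}, where $Y$ is closed in a metrizable space. Your proof covers every first countable $T_1$ space.

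\textbf{The $T_1$ hypothesis.} It is genuinely necessary and should be added to the statement. Let the points of $A$ be isolated and let $Y$ be the only open set containing $p$. Then every stated hypothesis holds, since $p$ is an accumulation point of every subset of size $\kappa$. Yet no subspace of $Y$ is homeomorphic to $X_\kappa$: subspaces avoiding $p$ are discrete, and subspaces containing $p$ and another point are not $T_1$.

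\textbf{A small nit.} In Step~2, $\sup_n|D_n|=\kappa$ follows from $\kappa$ being uncountable, not from $\cf(\kappa)=\omega$.
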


\begin{proof} First, observe that $p$ is a complete accumulation point of $A$ in $Y$. Indeed, suppose otherwise; that is, suppose that there exists $U \in \tau_Y$ such that $p \in U$ and $|A \cap U| < \kappa$. Then, since $p \not\in \overline{A \setminus U}$, it follows that $A \setminus U$ is a closed and discrete subspace of $Y$, which implies that $|A \setminus U| < \kappa$ and, consequently, that $|A| < \kappa$, a contradiction.

Let $\{B_n : n \in \mathbb{N}\}$ be a decreasing local base for $p$ in $Y$. We now proceed with a recursive construction:

\medskip

\noindent {\bf Claim.} There exists a collection $\{A_n : n \in \mathbb{N}\}$ of closed and discrete subspaces of $Y$ such that
\[
\textstyle
|A_n| = \kappa_n \quad \text{and} \quad A_n \subseteq (A \cap B_n) \setminus \bigcup_{0 < m < n} A_m
\]
for every $n \in \mathbb{N}$.

\medskip

Indeed, suppose that, for some $n \in \mathbb{N}$, the family $\{A_m : 0 < m < n\}$ has been constructed with the desired properties. The relations $|A \cap B_n| = \kappa$ and $|\bigcup_{0 < m < n} A_m| < \kappa$ imply that $(A \cap B_n) \setminus \bigcup_{0 < m < n} A_m$ is a discrete subspace of $Y$ of cardinality $\kappa$. By Lemma~\ref{lema_CF_metrizable_2}(3), there exists $A_n \subseteq (A \cap B_n) \setminus \bigcup_{0 < m < n} A_m$ such that $|A_n| = \kappa_n$ and $A_n$ has no accumulation points in $Y$. Clearly, $\{A_m : 0 < m \leq n\}$ has the required properties.

\medskip

Let $C_n := \bigcup_{0 < m \leq n} A_m$ for each $n \in \mathbb{N}$. Observe that, if $C := \bigcup_{n \in \mathbb{N}} C_n$ and $Z := C \cup \{p\}$, then $Z$ is a closed subspace of $Y$, $C$ is a discrete subspace of $Z$, the family $\{C_n : n \in \mathbb{N}\}$ is increasing, and $|C_n| = \kappa_n$ for every $n \in \mathbb{N}$.

To verify, by Lemma~\ref{lema_CF_metrizable_4}, that $Z$ is homeomorphic to the space $X_\kappa$ from Example~\ref{ej_CF_metrizable_X_kappa}, it remains to show that $\{Z \setminus C_n : n \in \mathbb{N}\}$ is a local base for $p$ in $Z$. Since $\{Z \cap B_n : n \in \mathbb{N}\}$ is a local base for $p$ in $Z$, it suffices to show that $Z \setminus C_n \subseteq Z \cap B_{n+1}$ for every $n \in \mathbb{N}$, since $p \in \bigcap_{n \in \mathbb{N}} (Z \setminus C_n)$. To this end, note that, if $n \in \mathbb{N}$, then
\begin{align*}
Z \setminus (C_n \cup \{p\}) \subseteq C \setminus C_n &= \textstyle \bigcup_{m \in \mathbb{N}} A_m \setminus \bigcup_{0 < m \leq n} A_m \\
&= \textstyle \bigcup_{m > n} A_m \subseteq \bigcup_{m > n} B_m \subseteq B_{n+1}.
\end{align*}
\end{proof}

\begin{theorem}\label{thm_CF_metrizable_3} Let $X$ be an infinite metrizable space. If $w(X)>\omega$, $\cf(w(X))=\omega$ and $\mathfrak{c} > w(X)$, then
\[
w(X)^+ \in \SC(X) \setminus \SC(\CL(X)).
\]

\end{theorem}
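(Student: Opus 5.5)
Write $\kappa := w(X)$; by hypothesis $\kappa$ is uncountable, $\cf(\kappa)=\omega$, and $\mathfrak{c}>\kappa$. That $\kappa^+ \in \SC(X)$ is immediate from Proposition~\ref{prop_CF_metrizable}, since $nw(X)=w(X)=\kappa$ and $\cf(\kappa^+)=\kappa^+>\kappa$. The real task is to show $\kappa^+\notin\SC(\CL(X))$. The strategy is to find a closed subspace $F$ of $X$ with $\CL(F)$ embedded in $\CL(X)$ such that $\kappa^+\notin\SC(\CL(F))$. Note that for closed $F\subseteq X$, $\CL(F)$ is a subspace of $\CL(X)$ in the Vietoris topology, so Proposition~\ref{prop_CF_basico}(1) transfers the failure upward.

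\emph{Case 1: $X$ has a closed discrete subspace of cardinality $\kappa$.} Then Lemma~\ref{lema_CF_metrizable_1} gives $\SC(\CL(X))\subseteq\{\mu:\cf(\mu)>2^\kappa\}$. Since $2^\kappa>\kappa$, we get $\kappa^+\notin\SC(\CL(X))$. Alternatively, it suffices to have a closed discrete subspace of cardinality $\omega$ with $2^\omega=\mathfrak{c}>\kappa$, i.e.\ $\mathfrak{c}\ge\kappa^+$. So Case 1 also covers every $X$ that is not countably compact.

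\emph{Case 2: general $X$.} This case is not really needed separately, since a metrizable space with $w(X)>\omega$ is not compact, hence not countably compact, and therefore contains an infinite closed discrete subspace $A$ with $|A|=\omega$. Lemma~\ref{lema_CF_metrizable_1} then yields $\SC(\CL(X))\subseteq\{\mu\in\CN:\cf(\mu)>\mathfrak{c}\}$. As $\mathfrak{c}>\kappa$ gives $\mathfrak{c}\ge\kappa^+$, we conclude $\kappa^+\notin\SC(\CL(X))$.

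Thus the whole proof reduces to two facts: Proposition~\ref{prop_CF_metrizable}, and Lemma~\ref{lema_CF_metrizable_1} applied to a countable closed discrete set. The only point to double-check is the existence of such a set: uncountable weight forces non-separability, hence non-compactness, and metrizable countably compact spaces are compact. I expect no genuine obstacle.

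If the author intends the route through Lemmas~\ref{lema_CF_metrizable_3}--\ref{lema_CF_metrizable_5}, there is an alternative argument. Take a discrete $D\subseteq X$ with $|D|=\kappa$. If some subset of $D$ of size $\kappa$ has no accumulation points, we are in Case 1. Otherwise, pick a point $p$ and apply Lemma~\ref{lema_CF_metrizable_5} to $Y=D\cup\{p\}$; this requires arranging $p$ to be a limit of $D$, which is the delicate step. That produces a closed copy of $X_\kappa$, and Lemma~\ref{lema_CF_metrizable_3} finishes the argument. The simpler argument above avoids this delicate step, so I would present it.
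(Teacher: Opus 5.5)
Your proof is correct, and it takes a genuinely different and much shorter route than the paper.

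The paper splits into two cases. If $X$ has a closed discrete subspace of size $\kappa$, Lemma~\ref{lema_CF_metrizable_1} finishes. Otherwise it cites Hodel to obtain a discrete $A$ with $|A|=\kappa$ and $\overline{A}=A\cup\{p\}$. It then uses Lemmas~\ref{lema_CF_metrizable_2}(3), \ref{lema_CF_metrizable_4} and~\ref{lema_CF_metrizable_5} to extract a closed copy $Z$ of $X_\kappa$, and applies Lemma~\ref{lema_CF_metrizable_3} to $\CL(Z)\subseteq\CL(X)$.

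You avoid the case split altogether. Uncountable weight rules out compactness, hence countable compactness, so $X$ has a countably infinite closed discrete set. Lemma~\ref{lema_CF_metrizable_1} then gives $\SC(\CL(X))\subseteq\{\mu\in\CN:\cf(\mu)>\mathfrak{c}\}$, which excludes $\kappa^+$ because $\kappa^+\le\mathfrak{c}$. This is also what really drives the paper's second case. The only cardinality input in the proof of Lemma~\ref{lema_CF_metrizable_3} is $|\mathcal{P}([\kappa_0,\kappa_1))|=\mathfrak{c}$. So that lemma is in effect Lemma~\ref{lema_CF_metrizable_1} applied to the countable clopen discrete set $[\kappa_0,\kappa_1)\subseteq X_\kappa$.

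What each route buys:
\begin{itemize}
\item Your argument never uses $\cf(w(X))=\omega$. It therefore proves the conclusion for every metrizable $X$ with $\omega<w(X)<\mathfrak{c}$, and it puts this theorem and the non-compact half of Theorem~\ref{thm_CF_metrizable_1}(2) under one roof. Feeding Lemma~\ref{lema_CF_metrizable_2}(3) into the same mechanism even shows that $2^\lambda>w(X)$ for some $\lambda<w(X)$ suffices.
\item The paper's longer route buys structural information rather than the theorem itself. It shows that such an $X$ without a closed discrete subspace of size $\kappa$ contains a closed copy of $X_\kappa$. That is the very space witnessing the \textsf{GCH} side of the independence in Theorem~\ref{thm_CF_metrizable_2}.
\end{itemize}

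Your last paragraph is fine as a side remark. The ``delicate step'' you flag there is precisely what the paper outsources to Hodel, and your direct argument makes it unnecessary.
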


\begin{proof} Let $\kappa := w(X)$. First, if $X$ contains a closed discrete subspace of cardinality $\kappa$, then Lemma~\ref{lema_CF_metrizable_1} directly implies that $\kappa^+ \in \SC(X)\setminus \SC(\CL(X))$. Therefore, we may assume that $X$ contains no subspaces of cardinality $\kappa$ without accumulation points.

Under this assumption, according to \textsc{II} in the proof of \cite[Theorem~8.5, p.~34]{hodel1984}, the space $X$ contains a discrete subspace of cardinality $\kappa$ with exactly one accumulation point. Let $A$ be a discrete subspace of $X$ and let $p \in X \setminus A$ be such that $|A|=\kappa$ and $\overline{A}=A\cup\{p\}$. Observe that, since the subspace $Y := A \cup \{p\}$ is closed in $X$, it has no subspaces of cardinality $\kappa$ without accumulation points. Thus, Lemma~\ref{lema_CF_metrizable_5} guarantees that $Y$ contains a closed subspace $Z$ homeomorphic to the space $X_\kappa$ from Example~\ref{ej_CF_metrizable_X_kappa}.

Finally, since $Z$ is closed in $Y$ and $Y$ is closed in $X$, it follows that $\CL\vt{Z}$ is a subspace of $\CL(X)$, and hence Proposition~\ref{prop_CF_basico}(1) implies that
\[
\SC(\CL(X)) \subseteq \SC(\CL\vt{Z}).
\]
Consequently, since Lemma~\ref{lema_CF_metrizable_3} establishes that $\kappa^+ \not\in \SC(\CL(X_\kappa))$, we conclude that
\[
\kappa^+ \in \SC(X) \setminus \SC(\CL(X)).
\]
\end{proof}

In summary, the question of whether an infinite metrizable space $X$ satisfies $\SC(X)=\SC(\CL(X))$ is settled in \textsf{ZFC} when $w(X) = \omega$ or $\cf(w(X))>\omega$, whereas it is independent of \textsf{ZFC} when $w(X) > \omega$ and $\cf(w(X)) = \omega$. We conclude the present paper with a single result that collects the results established in Theorems~\ref{thm_CF_metrizable_1}, \ref{thm_CF_metrizable_2}, and \ref{thm_CF_metrizable_3}.

\begin{theorem}\label{thm_CF_principal} The following statements hold for every infinite metrizable space $X$.

\begin{enumerate}
\item If $w(X) = \omega$, then $X$ is compact if and only if $\SC(X) = \SC(\CL(X))$.

\item If $\cf(w(X))>\omega$, then
\[
w(X)^+ \in \SC(X) \setminus \SC(\CL(X)).
\]

\item If $w(X)>\omega$, $\cf(w(X))=\omega$, and the inequality $\mathfrak{c} > w(X)$ holds, then
\[
w(X)^+ \in \SC(X) \setminus \SC(\CL(X)).
\]

\item There exists an ultrametrizable space $Y$ such that $w(Y)>\omega$, $\cf(w(Y))=\omega$, and, if \textsf{GCH} holds, then
\[
\SC(Y) = \SC(\CL(Y)).
\]

\end{enumerate}

\end{theorem}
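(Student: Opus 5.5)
Theorem~\ref{thm_CF_principal} collects results already proved, so the plan is to match each item with its source and check that the hypotheses line up.

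Item~(1) is Theorem~\ref{thm_CF_metrizable_1}(2) verbatim. Recall how that went. If $X$ is compact, then $\CL(X)=\K(X)$, and Proposition~\ref{prop_CF_metrizable_hip} gives $\SC(X)=\SC(\K(X))$. If $X$ is not compact, then a metrizable space that is not compact is not countably compact, so $X$ has a countably infinite closed discrete subspace. Lemma~\ref{lema_CF_metrizable_1} then gives $\omega_1\notin\SC(\CL(X))$. On the other hand $nw(X)=\omega$, so Proposition~\ref{prop_CF_metrizable} gives $\omega_1\in\SC(X)$.

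Item~(2) is Theorem~\ref{thm_CF_metrizable_1}(1). First, $X$ has a discrete subspace of cardinality $w(X)$. Lemma~\ref{lema_CF_metrizable_2}(1) turns it into a closed discrete subspace of the same size. Lemma~\ref{lema_CF_metrizable_1} then forces every super caliber of $\CL(X)$ to have cofinality greater than $2^{w(X)}\ge w(X)^+$. Since $w(X)^+$ is regular, it is not in $\SC(\CL(X))$. By Proposition~\ref{prop_CF_metrizable}, $w(X)^+\in\SC(X)$ because $\cf(w(X)^+)=w(X)^+>w(X)=nw(X)$.

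Item~(3) is Theorem~\ref{thm_CF_metrizable_3}, and it is the substantive case. Put $\kappa:=w(X)$ and split into two cases.
\begin{itemize}
\item If $X$ has a closed discrete subspace of size $\kappa$, Lemma~\ref{lema_CF_metrizable_1} finishes as in item~(2).
\item Otherwise, Hodel's argument gives a discrete set $A$ with $|A|=\kappa$ and a single accumulation point $p$. Lemma~\ref{lema_CF_metrizable_5} then yields a closed copy $Z$ of $X_\kappa$ inside $A\cup\{p\}$, hence closed in $X$. So $\CL(Z)$ embeds in $\CL(X)$, and Lemma~\ref{lema_CF_metrizable_3}, which uses $\mathfrak{c}>\kappa$, shows $\kappa^+\notin\SC(\CL(Z))$.
\end{itemize}
Heredity (Proposition~\ref{prop_CF_basico}(1)) transfers this failure to $\CL(X)$, while $\kappa^+\in\SC(X)$ by Proposition~\ref{prop_CF_metrizable}. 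The main obstacle is this second case, namely building the copy of $X_\kappa$. It relies on Lemma~\ref{lema_CF_metrizable_2}(3) to choose $A_n$ of size exactly $\kappa_n$ without accumulation points inside $B_n$, and on Lemma~\ref{lema_CF_metrizable_4} to recognize the resulting space.

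Item~(4) takes $Y:=X_\kappa$ from Example~\ref{ej_CF_metrizable_X_kappa} with $\kappa=\aleph_\omega$, which is uncountable with cofinality $\omega$. The example shows $Y$ is ultrametrizable with $w(Y)=\kappa$. Under \textsf{GCH}, Lemma~\ref{lema_peso_red_X_kappa} gives $nw(\CL(Y))=2^{<\kappa}=\kappa$, and Theorem~\ref{thm_CF_metrizable_2} then gives $\SC(Y)=\SC(\CL(Y))$.
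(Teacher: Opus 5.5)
Your proposal is correct and matches the paper. The paper gives no separate proof of Theorem~\ref{thm_CF_principal}; it obtains it by collecting Theorems~\ref{thm_CF_metrizable_1}, \ref{thm_CF_metrizable_2}, and \ref{thm_CF_metrizable_3}, and your recaps, including the case split in item~(3) and the instance $Y=X_{\aleph_\omega}$ in item~(4), follow those proofs. The only step you leave implicit is one the paper checks explicitly: $A\cup\{p\}$ is closed in $X$, so it inherits the hypothesis of Lemma~\ref{lema_CF_metrizable_5} that it has no subsets of cardinality $\kappa$ without accumulation points.
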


\end{document}